\newcommand{\Alt}{\mathop{\mathrm{Alt}}}
\newcommand{\Sym}{\mathop{\mathrm{Sym}}}
\DeclareMathOperator{\BBF}{\mathbb{F}}
\DeclareMathOperator{\Out}{Out}
\newcommand{\Aut}{\mathop{\mathrm{Aut}}}
\DeclareMathOperator{\soc}{soc}
\DeclareMathOperator{\aut}{Aut}
\DeclareMathOperator{\GL}{GL} 
\DeclareMathOperator{\PSU}{PSU}
\DeclareMathOperator{\PGL}{PGL}
\DeclareMathOperator{\PSL}{PSL}
\DeclareMathOperator{\GU}{GU}
\DeclareMathOperator{\SU}{SU}
\DeclareMathOperator{\sym}{Sym}
\DeclareMathOperator{\AGL}{AGL}
\DeclareMathOperator{\alt}{Alt}
\renewcommand{\AA}{\mathcal A}
\renewcommand{\SS}{\mathcal S}
\renewcommand{\emptyset}{\varnothing}
\newcommand{\BB}{\mathcal{B}}
\newtheorem{thm}{Theorem}[section]
\newtheorem{cor}[thm]{Corollary}
\newtheorem{lemma}[thm]{Lemma}
\newtheorem{prop}[thm]{Proposition}
\theoremstyle{definition} 
\theoremstyle{definition}
\newtheorem{defn}[thm]{Definition}
\newtheorem{rem}[thm]{Remark}
\numberwithin{equation}{section}
\renewcommand{\footnote}{\endnote}
\newcommand{\ignore}[1]{}\makeglossary
\newcommand*{\medcap}{\mathbin{\scalebox{1.5}{\ensuremath{\cap}}}}%
\begin{document}
	\title[]{Primitive permutation IBIS groups}

\author[ Andrea Lucchini]{Andrea Lucchini}
\address{Andrea Lucchini, Dipartimento di Matematica “Tullio Levi-Civita", Università degli studi di Padova, Via Trieste 63,
35121, Padova, Italy   }
\email{lucchini@math.unipd.it}

\author[Marta Morigi]{Marta Morigi}
\address{Marta Morigi, Università di Bologna, Piazza di Porta San Donato 5, 40126 Bologna, Italy}
\email{marta.morigi@unibo.it}

\author[M. Moscatiello]{Mariapia Moscatiello}
\address{Mariapia Moscatiello, Dipartimento di Matematica, Università di Bologna, Piazza di Porta San Donato 5, 40126 Bologna, Italy} 
\email{mariapia.moscatiello@unibo.it}

\begin{abstract} Let $G$ be a finite permutation group on $\Omega$. An ordered sequence of elements of $\Omega$, $(\omega_1,\dots, \omega_t)$, is an irredundant base for $G$ if the pointwise stabilizer $G_{(\omega_1,\dots, \omega_t)}$ is trivial and no point is fixed by the stabilizer of its predecessors. If all irredundant bases of $G$ have the same size we say that $G$ is an IBIS group. In this paper we show that if a primitive permutation group is IBIS, then it must be almost simple, of affine-type, or of diagonal type. Moreover we prove that a diagonal-type primitive permutation groups is IBIS if and only if it is isomorphic to $\PSL(2,2^f)\times \PSL(2,2^f)$ for some $f\geq 2,$ in its  diagonal action of  degree $2^f(2^{2f}-1).$

\end{abstract}

\keywords{primitive group; base size; IBIS group }
\subjclass[2010]{20B15}
\maketitle

\section{Introduction}

 Let $G\le \sym(\Omega)$ be a finite permutation group.  A subset $\BB$ of $\Omega$ is a \emph{base} for
  $G$ if the pointwise stabilizer $G_{(\BB)}$ is trivial and we denote by $b(G)$ the
minimal size of a base for $G$.   An ordered sequence of elements of $\Omega$, $\Sigma:=(\omega_1,\dots, \omega_t)$, is \emph{irredundant} for $G$ if no point in $\Sigma$ is fixed by the stabiliser of its predecessors. Moreover $\Sigma$ is an \emph{irredundant base} of $G$, if it is a base and it is irredundant. In particular, such an irredundant base provides the following stationary chain
$$G>G_{\omega_1}>G_{(\omega_1, \omega_2)}>\dots>G_{(\omega_1,\dots, \omega_{t-1})}>G_{(\omega_1,\dots, \omega_t)}=1,$$
where the inclusion of subgroups are strict.

Determining base sizes is a fundamental problem
in permutation group theory, with a long history stretching back to the nineteenth
century. Since the knowledge of how an element
$g\in G$ acts on a base $\BB$ completely determine the action of $g$ on
$\Omega$, bases plays an important role in computational group theory.
The smaller the base, the less memory is needed to store group elements, so there are practical reasons for trying to find small bases.

Irredundant bases for a permutation group possess some of the features of bases in a vector space.
Indeed, $\BB$ is a basis for a vector space $V$ if and only if it is an irredundant base for the general linear group $\GL(V)$ in its natural action on $V.$ 
However, some familiar properties of bases in vector spaces do not extend to bases for permutation groups: irredundant bases for groups in general are not preserved by re-ordering, and they can have different sizes.

In~\cite{ibis}, Cameron and Fon-Der-Flaass showed that all irredundant bases for a permutation group $G$ have the same size if and only if all the irredundant bases for $G$ are preserved by re-ordering. Groups satisfying one of the previous equivalent properties are called \emph{Irredundant Bases of Invariant Size groups}, \emph{IBIS groups} for short.
Moreover, Cameron and Fon-Der-Flaass~\cite{ibis} also proved that for a permutation group $G$ to be IBIS is a necessary and sufficient for the irredundant bases of $G$ to be the base of a combinatorial structure known with the name of matroid.  If this condition hold, then $G$ acts geometrically on the matroid and when $G$ acts primitively and is not cyclic of prime order, then the matroid is geometric (see~\cite{ibis} for more details). This brought Cameron to ask for a possible classification of the IBIS groups. As explained by Cameron himself in~\cite[Section 4-14]{cam}, there is no hope for a complete classification of IBIS group, when the cardinalities of the bases are large. But it might be reasonable to pose this question for primitive groups.

\

It is easy to observe that the symmetric group $\sym(k)$ in its natural action on $[k]:=\{1,\dots,k\}$ has base size $k-1$ and all subsets of $[k]$ with size $k-1$ are irredundant bases for $\sym(k).$ Similarly the base size for the action of the alternating group $\alt(k)$ acting naturally on $[k]$ has base size $k-2$ and all subsets of $[k]$ with size $k-2$ are irredundant bases for $\alt(k).$ There are only finitely many values of $k$ for which $\sym(k)$ or $\alt(k)$ acts as a primitive IBIS  group, not in its natural
representation ~\cite[Theorem 3.5]{ibis}. In particular if the action of $\sym(k)$ and $\alt(k)$  on $m$-sets of $[k]$ is IBIS, then $k\le 12$ and $m\le 5$. In this context, it is worth remembering that  $\sym(6)$ and $\alt(6)$ act as  primitive IBIS groups on 2-sets (with degree 15) and on partitions into $2$-sets of size $3$ of $[6]$ (with degree 10). There exist other almost simple primitive groups that are IBIS, \emph{e.g.} $\PSL(2,q)$ and $\PGL(2,q)$ on their action of degree $q+1$, $\PSL(2,2^f)$ acting on the cosets of a dihedral subgroup of order $2(2^f + 1)$, 
$\PSL(3,2)$ in degree 7, and $\alt(7)$ in degree 15. 
In another direction, it is not difficult to observe that Frobenius groups are IBIS group with base size $2$. Hence, the affine group $\AGL(1,p)$ is a primitive IBIS group with base size $2$, for every prime $p$. More in general, $\AGL(d,p)$ is a primitive IBIS group with base size $d+1$ for every prime $p$, and for every natural number $d$. 

\

Therefore we have quite a few examples of almost simple and affine-type primitive groups that are IBIS. So before launching in a complete classification of primitive IBIS groups, we decided to start the investigation of primitive permutation groups that can be IBIS in the classes of diagonal-type, product-type and twisted wreath product-type.
 The main result of this paper is the following theorem, that we prove in Section~\ref{sec:mainthm}.

\begin{thm}\label{thm:main} Let $G$ be a primitive permutation IBIS group. Then one of the following holds:
\begin{itemize}
\item[1.] The group $G$ is of affine type.
\item[2.] The group $G$ is almost simple.
\item[3.] The group $G$ is of diagonal type.
\end{itemize}
Moreover, $G$ is an IBIS primitive group of diagonal type if and only it belongs to the infinite family of non-monolithic diagonal type groups $\{\PSL(2,2^f)\times \PSL(2,2^f)\mid f\ge 2\}$ having degree $|\PSL(2,2^f)|=2^f(2^{2f}-1).$
\end{thm}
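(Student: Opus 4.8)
The plan is to run through the O'Nan--Scott types and, for each type absent from the conclusion, to exhibit two irredundant bases of different sizes; by the theorem of Cameron and Fon-Der-Flaass this already shows the group is not IBIS. Affine and almost simple groups are permitted (the introduction lists IBIS examples of both), so the work is to (i) eliminate product-action and twisted-wreath-product groups, and (ii) decide exactly which diagonal-type groups survive. Throughout I use that $G$ is IBIS if and only if every reordering of an irredundant base is again irredundant, so that producing one irredundant base together with a reordering that becomes redundant — equivalently, two irredundant bases of different lengths — suffices to rule $G$ out.

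For a product-action group $G\le H\wr \Sym(k)$ on $\Gamma^k$ (with $H$ primitive almost simple or diagonal on $\Gamma$), I would build irredundant bases that consume the top group $\Sym(k)$ and the component stabilisers in different orders: choosing points whose coordinates are pairwise distinct trivialises much of the permutation part at once, whereas points with many repeated coordinates force the reduction to proceed one factor at a time, and the two strategies terminate at different lengths. The compound-diagonal and compound-holomorph types carry exactly such a product action on a power of a simple-diagonal domain, so they are disposed of in the same way. For twisted-wreath-product groups the socle $N=T^k$ is regular, so a single point kills $N$ and the remaining reduction takes place inside a complement acting by permuting and twisting the factors; here too one can arrange the twisting to be exhausted at different stages. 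Making these constructions uniform across all component groups $H$ and all transitive top groups is, I expect, the main obstacle, since — unlike the diagonal case below — there is no single clean invariant to appeal to.

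For the diagonal-type groups I would first set up the key translation. With socle $N=T^k$ and $\Omega$ identified with the cosets of the diagonal, one base point reduces $N$ to a diagonal copy $D\cong T$ acting by simultaneous conjugation, after which the pointwise stabiliser of further points is an iterated centraliser in $T$. Thus, when $G=N$, the irredundant bases correspond to the strictly descending chains $T>C_T(g_2)>C_T(g_2,g_3)>\cdots>1$, and $G$ is IBIS precisely when all such chains have the same length. If $k\ge 3$ a single base point already supplies $k-1\ge 2$ elements of $T$, which can be chosen to generate $T$ and hence to have trivial centraliser; this lets the diagonal collapse in one step, while spreading generators over several points forces a longer chain, so no $k\ge 3$ diagonal group is IBIS. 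If $k=2$, every chain has length $2$ if and only if every nontrivial element of $T$ has an abelian, self-centralising, TI centraliser, that is, if and only if $T$ is a CA-group; by Suzuki's classification of finite simple CA-groups this forces $T\cong\PSL(2,2^f)$ with $f\ge 2$, and then every irredundant base has size $3$. Conversely, if $T$ is not a CA-group some centraliser $C_T(g)$ is nonabelian, which yields a centraliser chain of length at least $3$ alongside the generic length-$2$ chain, so $G$ is not IBIS.

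It remains to treat the groups with $k=2$ in which $G$ properly contains the socle, namely those incorporating a diagonal field automorphism or the factor swap. Here the extra elements of the point stabiliser (inversion $t\mapsto t^{-1}$ for the swap, the field automorphism acting diagonally) enlarge each stabiliser in the chain, and by placing them at different stages I would construct irredundant bases of sizes $3$ and $4$; for instance, for the swap one takes an involution, a torus element inverted by it, and a further point, against the generic size-$3$ base. In particular the monolithic $k=2$ group containing the swap is excluded, leaving exactly the non-monolithic groups $\PSL(2,2^f)\times\PSL(2,2^f)$. Assembling the cases proves both the trichotomy and the classification of the diagonal IBIS groups.
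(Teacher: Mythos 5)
Your overall strategy coincides with the paper's: rule out each forbidden O'Nan--Scott type by exhibiting two irredundant bases of different sizes, and for the diagonal socle with $k=2$ translate irredundant bases into strictly descending chains of iterated centralisers, so that the IBIS condition becomes ``for all nontrivial $s,t$ either $C_T(s)=C_T(t)$ or $C_T(s)\cap C_T(t)=1$'', i.e.\ $T$ is a CT/CA-group, whence $T\cong\PSL(2,2^f)$ by Suzuki. That is exactly the paper's key lemma for the surviving family, and your treatment of the non-monolithic case $G=T\times T$ is essentially complete.

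However, the parts you describe as routine are where the paper's actual work lies, and as sketched they have genuine gaps. (1) For the monolithic $k=2$ diagonal groups your plan is to produce an irredundant base of size $4$ ``against the generic size-$3$ base''; but Fawcett's result only gives $b(G)\in\{3,4\}$ when the top group is $\Sym(2)$, so a size-$4$ irredundant base proves nothing unless you first show $b(G)=3$. The paper does this via the Leemans--Liebeck generation theorem (a pair $x,t$ with $T=\langle x,t\rangle$, $t$ an involution, and no involutory automorphism inverting $x$ and fixing $t$), which excludes $\Alt(7)$, $\PSL(2,q)$, $\PSL(3,q)$, $\PSU(3,q)$; each of these then needs a bespoke argument (for some of them one instead builds an irredundant non-base sequence of length $4$, forcing an irredundant base of size $\ge 5>b(G)$). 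Your sketch does not engage with any of this, and it is the bulk of the paper. (2) In the product-action case, ``terminate at different lengths'' is not automatic: the two partial bases (of sizes $r+1$ and $1+kr$) must still be completed, and a priori the completions could even out. The paper's contradiction comes from a quantitative step you are missing: the leftover stabiliser embeds in $\Sym(k)$, so the completion of the short base has length at most $\ell(\Sym(k))<3k/2$ (Cameron--Solomon--Turull), which is incompatible with $r(k-1)\ge 2(k-1)$. (3) For diagonal groups with $k\ge3$ and $\Alt(k)\le P_G$, the stabiliser of two points is not just an iterated centraliser in $T$: it contains elements $(y,\dots,y)\sigma$ with $\sigma\neq 1$, so ``the diagonal collapses in one step'' is not a proof; the paper's Propositions for $k=3$ and $k\ge4$ are devoted precisely to controlling this permutation part.
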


It is not difficult to see that there are no primitive permutation IBIS groups having non-abelian socle and base size $2$ (see Lemma~\ref{lem:notibis2}). This could lead to expect that there are not examples of IBIS groups with non-abelian socle and small base size. However, in Proposition~\ref{prop:sorpresa}, we prove that $\PSL(2,2^f)\times \PSL(2,2^f)$ in its diagonal action of degree $| \PSL(2,2^f)|$ is an IBIS group with base size $3$. In 
 Propositions~\ref{prop:solosorpresa0},~\ref{prop:solosorpresa} and Theorem~\ref{thm:nonmonolithicdiag}, as already mentioned in Theorem~\ref{thm:main}, we show that there are no other diagonal-type primitive permutation groups that are IBIS.
Investigating for this, we proved some results regarding non-abelian simple groups (see Subsection~\ref{subsec:PICP}) and about the base size of primitive diagonal-type permutation groups (Lemma~\ref{l1} and Lemma~\ref{lem:bPSL}). 
Finally, in Sections~\ref{sec:prod},~\ref{sec:TW}, we prove that  there are no IBIS primitive groups that are of product-type nor twisted wreath product groups.
%

\section{Preliminaries}
The modern key for analysing a finite primitive permutation group $G$ is to study the \textit{socle} $N$ of $G$. The socle of an arbitrary
finite group is isomorphic to the non-trivial direct product of simple
groups; moreover, for finite primitive groups these simple groups are
pairwise isomorphic. The O'Nan-Scott theorem describes in details the
embedding of $N$ in $G$ and collects some useful information about the
action of $G$. 
 In~\cite{LPSLPS}, Liebeck, Praeger and Saxl gave a self-contained proof, precisely five types of primitive groups
are defined (depending on the group and action structure of the
socle), namely the \textit{Affine-type}, the \textit{Almost Simple}, the \textit{Diagonal-type}, the  \textit{Product-type}, and the \textit{Twisted Wreath product}, and it is shown that  every primitive group belongs to exactly one of these types. We refer the reader to~\cite{LPSLPS} for the precise definition of the classes.

\

\noindent
Now, we state and prove same preliminary results regarding bases.

\begin{rem}\label{rem:ibisirredundantsize}
Since a base of $G$ with minimal size $b(G)$ is clearly irredundant, then a  permutation group $G$ is IBIS if and only if all its irredundant bases have size $b(G)$.
\end{rem}

%

	The proof of the following Lemma is straightforward.
\begin{lemma}\label{lem:partodadovevoglio} Let $G$ be a transitive permutation group on a set $\Omega$. Then $\BB\subset \Omega$ is a an irredundant base if and only if $\BB ^g\subset \Omega$ is an irredundant base for every $g\in G$. 
\end{lemma}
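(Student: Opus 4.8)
The plan is to reduce everything to the standard conjugacy relation between point stabilizers, namely that for any $\alpha\in\Omega$ and $g\in G$ one has $G_{\alpha^g}=g^{-1}G_\alpha g=(G_\alpha)^g$, and more generally, for a tuple $(\omega_1,\dots,\omega_i)$,
\[
G_{(\omega_1^g,\dots,\omega_i^g)}=\bigl(G_{(\omega_1,\dots,\omega_i)}\bigr)^g .
\]
Writing $\BB=(\omega_1,\dots,\omega_t)$ and $\BB^g=(\omega_1^g,\dots,\omega_t^g)$, I would first record this identity (it follows at once by induction on $i$ from the single-point case). Since $1\in G$ gives $\BB^1=\BB$, the equivalence is really the statement that, for each fixed $g$, the sequence $\BB$ is an irredundant base if and only if $\BB^g$ is; the ``for every $g$'' formulation then follows immediately, and the reverse implication is trivial.

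The verification splits according to the two defining conditions. For the \emph{base} condition, conjugation by $g$ is an isomorphism of $G$, so $G_{(\BB^g)}=(G_{(\BB)})^g$ is trivial precisely when $G_{(\BB)}$ is trivial. For the \emph{irredundancy} condition, I would check position by position that $\omega_i^g$ is fixed by $G_{(\omega_1^g,\dots,\omega_{i-1}^g)}$ if and only if $\omega_i$ is fixed by $G_{(\omega_1,\dots,\omega_{i-1})}$. Concretely, an element $g^{-1}hg$ with $h\in G_{(\omega_1,\dots,\omega_{i-1})}$ satisfies $(\omega_i^g)^{g^{-1}hg}=(\omega_i^h)^g$, so it fixes $\omega_i^g$ exactly when $h$ fixes $\omega_i$; hence the predecessor stabilizer at stage $i$ moves $\omega_i^g$ if and only if the corresponding stabilizer moves $\omega_i$. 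Thus no point of $\BB^g$ is fixed by the stabilizer of its predecessors precisely when the same holds for $\BB$.

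Combining the two observations shows that $\BB$ is an irredundant base if and only if $\BB^g$ is, which is the claim. There is no real obstacle here: the whole argument is the remark that conjugation by $g\in G$ is an automorphism of $G$ carrying the stationary chain of $\BB$ to that of $\BB^g$ and, in particular, preserving strictness of each inclusion. I note also that transitivity of $G$ plays no role in the argument; it is part of the ambient hypotheses under which the lemma will be applied rather than something the proof exploits.
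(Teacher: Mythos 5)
Your argument is correct and is exactly the ``straightforward'' proof the paper omits: conjugation of point stabilizers, $G_{(\BB^g)}=(G_{(\BB)})^g$, transports both the base condition and the strictness of each step of the stabilizer chain. Your side remark that transitivity is not actually needed is also accurate.
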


\begin{lemma}\label{lem:notibis2}
Let $G\leq \sym(\Omega)$ be a primitive group with non-abelian socle and let $M$ be a normal subgroup of $G$ containing the socle of $G.$ If $M$ is not regular, then $M$  has an irredundant base of size at least 3. In particular there are no IBIS primitive permutation groups $G$ with non-abelian socle and $b(G)=2.$
\end{lemma}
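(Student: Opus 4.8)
The plan is to argue by contradiction: I assume that every irredundant base of $M$ has size at most $2$ and show that this forces $M$ to be either regular or a Frobenius group, both of which are incompatible with the hypotheses. Throughout I use that $M$ is transitive (it contains the transitive socle of $G$), that it is faithful, and that it is non-regular, so the point stabiliser $M_\omega$ is nontrivial for every $\omega\in\Omega$. First I would record the standard fact that, in a faithful action, every irredundant sequence extends to an irredundant base: if the current pointwise stabiliser $S$ is nontrivial, faithfulness provides a point $\gamma$ moved by $S$, and appending $\gamma$ keeps the sequence irredundant while strictly shrinking the stabiliser. Consequently, under the contradiction hypothesis every irredundant pair $(\omega_1,\omega_2)$ must already be a base, i.e. $M_{(\omega_1,\omega_2)}=1$. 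Unwinding the definition of irredundancy, the pair $(\omega_1,\omega_2)$ is irredundant precisely when $\omega_2\notin\mathrm{Fix}(M_{\omega_1})$, so the hypothesis becomes: $M_{\omega_1}\cap M_{\omega_2}=1$ whenever $\omega_2\notin\mathrm{Fix}(M_{\omega_1})$.

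The key step is to control the remaining case $\omega_2\in\mathrm{Fix}(M_{\omega_1})$. Here $M_{\omega_1}\le M_{\omega_2}$, and transitivity gives $|M_{\omega_1}|=|M_{\omega_2}|$, so $M_{\omega_1}=M_{\omega_2}$. Hence the relation $\alpha\approx\beta\iff M_\alpha=M_\beta$ is an equivalence relation whose classes are exactly the fixed-point sets $\mathrm{Fix}(M_\alpha)$. Since $M\trianglelefteq G$ one has $M_{\alpha^g}=M_\alpha^{\,g}$ for all $g\in G$, so $\approx$ is $G$-invariant and its classes form a system of blocks for $G$. As $G$ is primitive, these blocks are trivial. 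If the unique block is all of $\Omega$ then $M_\alpha$ fixes every point, whence $M_\alpha=1$ by faithfulness, contradicting non-regularity. Therefore every block is a singleton, i.e. $\mathrm{Fix}(M_\alpha)=\{\alpha\}$ for all $\alpha$; combined with the previous paragraph this says that distinct point stabilisers of $M$ intersect trivially, so $M$ is a Frobenius group. The point to emphasise is that $M$ need not itself be primitive, which is exactly why a naive two-point-stabiliser argument is not enough and the block reduction through $M\trianglelefteq G$ and the primitivity of $G$ is needed.

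It remains to rule out the Frobenius case, which I expect to be the main obstacle and where the non-abelian socle is finally used. Let $K$ be the Frobenius kernel; it is nilpotent and nontrivial. I would first check that every minimal normal subgroup $L$ of $M$ lies in $K$: indeed $L\cap K\trianglelefteq M$ forces $L\subseteq K$ or $L\cap K=1$, and in the latter case $[L,K]\subseteq L\cap K=1$ puts $L$ inside $C_M(k)\subseteq K$ for any $1\ne k\in K$, a contradiction. Thus every minimal normal subgroup of $M$ is elementary abelian. On the other hand, the socle $N=\soc(G)=T^k$ (with $T$ non-abelian simple) is normal in $M$, and the product of the factors $T_i$ over any single $M$-orbit on $\{T_1,\dots,T_k\}$ is a non-abelian minimal normal subgroup of $M$ contained in $N$, a contradiction. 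This completes the proof that $M$ has an irredundant base of size at least $3$.

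Finally, for the ``in particular'' statement I would apply the result with $M=G$: a primitive group with non-abelian socle is non-regular, since a regular primitive group has a trivial maximal point stabiliser and is therefore cyclic of prime order (with abelian socle). The first part then yields an irredundant base of $G$ of size at least $3$; together with a base of size $b(G)=2$ this exhibits irredundant bases of two different sizes, contradicting the IBIS property and establishing that no primitive permutation IBIS group with non-abelian socle can have $b(G)=2$.
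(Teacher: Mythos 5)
Your proof is correct and follows essentially the same strategy as the paper's: assume every irredundant base of $M$ has size $2$, deduce that $M$ would be a Frobenius group, and contradict the nilpotency of the Frobenius kernel (a nontrivial nilpotent normal subgroup of $G$) against the non-abelian socle. The only differences are in the details: the paper exploits primitivity through the maximality of $G_\omega$ to get $N_M(M_\omega)=M_\omega$, which is equivalent to your $\mathrm{Fix}(M_\omega)=\{\omega\}$ obtained via the block argument, and it dispatches the Frobenius case directly from the kernel being a nilpotent normal subgroup of $G$ rather than through your (also valid, more self-contained) analysis of the minimal normal subgroups of $M$.
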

\begin{proof}
Let $\omega \in \Omega.$ The point stabilizer $G_\omega$ normalizes $M_\omega=M\cap G_\omega,$ hence $G_\omega \leq N_G(M_\omega).$
Since $M$ is not regular, $M_\omega$ is not normal in $G$, so
$G_\omega \leq N_G(M_\omega) <G.$ On the other hand $G_\omega$ is a maximal subgroup of $G$, so $G_\omega=N_G(M_\omega)$ and consequently $N_M(M_\omega)=M_\omega.$ Now assume by contradiction that $M_\omega\cap M_\omega^x=1,$ for every $x\in M\setminus M_\omega.$ Then $M$ would be a Frobenius group with kernel $K$ and complement $M_\omega$ (see \cite[8.5.5]{robinson}). In particular $K=\soc(M)$ would be a nilpotent normal subgroup of $G$ (see \cite[10.5.6]{robinson}), in contradiction with the fact that $\soc(G)$ is a direct product of isomorphic non-abelian simple groups.
\end{proof}

\subsection{Results on finite non-abelian simple groups}\label{subsec:PICP}

In this subsection, the finite non-abelian simple CT-group are characterized.

\begin{defn} 
A finite group $G$ is said to be a \emph{CT-group} group when commutativity is a transitive relation on $G\setminus \{1\}$.
\end{defn}

\begin{lemma}
A  group $G$ is a CT-group if and only if, for every $x, y\in G\setminus \{1\}$, either $C_G(x)=C_G(y)$ or $C_G(x)\cap C_G(y)=1.$
\end{lemma}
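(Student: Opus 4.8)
The plan is to translate the CT condition into the language of centralizers and then verify both implications directly. Throughout I write $x \sim y$ to mean that two nontrivial elements $x, y \in G \setminus \{1\}$ commute, and I record the elementary observations that $x \sim y$ is equivalent to $y \in C_G(x)$ and to $x \in C_G(y)$, and that $\sim$ is automatically reflexive and symmetric. Thus the CT condition is precisely the assertion that $\sim$ is a transitive, hence an equivalence, relation on $G \setminus \{1\}$.

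For the forward implication I would assume $G$ is a CT-group and fix nontrivial $x, y$ with $C_G(x) \cap C_G(y) \neq 1$; the goal is to deduce $C_G(x) = C_G(y)$. Choosing a nontrivial $z \in C_G(x) \cap C_G(y)$, I take an arbitrary nontrivial $w \in C_G(x)$ and apply transitivity twice: from $w \sim x$ and $x \sim z$ I obtain $w \sim z$, and then from $w \sim z$ and $z \sim y$ I obtain $w \sim y$, i.e. $w \in C_G(y)$. This shows $C_G(x) \subseteq C_G(y)$, and the reverse inclusion follows by the symmetry of the roles of $x$ and $y$, giving $C_G(x) = C_G(y)$.

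For the converse I would assume the centralizer dichotomy and check transitivity of $\sim$ directly. Given nontrivial $x, y, z$ with $x \sim y$ and $y \sim z$, the element $y$ lies in $C_G(x) \cap C_G(z)$, so this intersection is nontrivial; the hypothesis then forces $C_G(x) = C_G(z)$. Since $x \in C_G(x) = C_G(z)$, we conclude $x \sim z$, which is exactly the transitivity required.

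There is no substantial obstacle: both directions are immediate once $\sim$ is identified with membership in centralizers. The only point requiring a moment's care is to apply the transitivity hypothesis to the correct triples of nontrivial elements, and to keep the identity element $1$, which lies in every centralizer, out of the way when passing between the statement $C_G(x) \cap C_G(y) \neq 1$ and the existence of a nontrivial common centralizing element.
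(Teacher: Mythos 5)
Your proof is correct and takes essentially the same approach as the paper: both directions are handled by translating commutation into centralizer membership and applying transitivity (respectively the centralizer dichotomy) to suitably chosen nontrivial elements. The only cosmetic difference is that in the converse you invoke the dichotomy once via $y\in C_G(x)\cap C_G(z)$, where the paper applies it twice to get $C_G(a)=C_G(b)=C_G(c)$.
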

\begin{proof}
First, let assume that $G$ is a CT-group. Moreover assume that $t$ is a non-identity element of $C_G(x)\cap C_G(y).$ Since  $t$ commutes with both $x$ and $y$, then every element commuting with $x$ commutes with $t$ and consequently with $y$. Similarly every element commuting with $y$ commutes with $t$ and consequently with $x$. That is $C_G(x)= C_G(y).$

Conversely assume that for every $x, y\in G\setminus \{1\}$, either $C_G(x)=C_G(y)$ or $C_G(x)\cap C_G(y)=1.$ Let $a,b, c\in G\setminus \{1\}$ and assume that $a$ commutes with $b$ and $b$ commutes with $c$. Then $a\in C_G(a)\cap C_G(b)$ and $b\in  C_G(b)\cap C_G(c),$ so $C_G(a)= C_G(b)=C_G(c)$.
\end{proof}

\begin{lemma}\label{lem:PICP} 
The group $T=\PSL_2(2^f)$, with $f\ge 2$ is a CT-group.
\end{lemma}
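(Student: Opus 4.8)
The plan is to verify the criterion from the preceding lemma: it suffices to show that for all $x,y\in T\setminus\{1\}$ one has either $C_T(x)=C_T(y)$ or $C_T(x)\cap C_T(y)=1$. Throughout I write $q=2^f$ and use that in characteristic $2$ the centre of $\ssl_2(q)$ is trivial, so $T=\PSL_2(q)=\ssl_2(q)=\PGL_2(q)$, of order $q(q-1)(q+1)$; moreover the three integers $q$, $q-1$, $q+1$ are pairwise coprime, since $q-1$ is odd.

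First I would classify the non-identity elements of $T$ by the factorisation of their characteristic polynomial and compute centralisers directly in the $2\times2$ matrix model. A non-identity element $x$ falls into exactly one of three classes: (i) \emph{unipotent}, i.e.\ with a repeated eigenvalue, which must equal $1$ because its square is $\det x=1$, so $x$ is a non-trivial involution; (ii) \emph{split semisimple}, with two distinct eigenvalues $\lambda\ne\lambda^{-1}$ in $\mathbb{F}_q^{\times}$; (iii) \emph{non-split semisimple}, with eigenvalues $\theta,\theta^{-1}\in\mathbb{F}_{q^2}\setminus\mathbb{F}_q$. A short computation (conjugating $x$ to a standard form) then shows that $C_T(x)$ equals, respectively, (i) a Sylow $2$-subgroup $S$, elementary abelian of order $q$; (ii) a split maximal torus, cyclic of order $q-1$; (iii) a non-split maximal torus, cyclic of order $q+1$. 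In each case the centraliser is abelian.

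The key point I would then isolate is that each of these subgroups is \emph{self-centralising}: if $A$ denotes $C_T(x)$ and $y\in A\setminus\{1\}$, then $C_T(y)=A$. For the split and non-split cases this holds because $y$ again has two distinct eigenvalues, so its centraliser is the unique torus containing it, which is $A$; for the unipotent case $y$ is an involution, so $C_T(y)$ is a Sylow $2$-subgroup containing the abelian group $A=S$, and comparing orders ($|C_T(y)|=q=|S|$) forces $C_T(y)=S$. Consequently any two of the subgroups $\{C_T(x):x\in T\setminus\{1\}\}$ are either equal or meet trivially: every non-identity element lies in exactly one of them, namely its own centraliser, so the sets $A\setminus\{1\}$ partition $T\setminus\{1\}$. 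Hence if $z\ne1$ lies in $C_T(x)\cap C_T(y)$, then $C_T(x)=C_T(z)=C_T(y)$, which is exactly the required dichotomy.

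I expect the main obstacle to be the second step, namely pinning down the centralisers \emph{exactly} rather than merely up to a torus: one has to rule out that the centraliser of a semisimple element is larger than its torus (handled by the distinct-eigenvalue, i.e.\ regularity, argument) and to ensure there is no accidental overlap between the three families. The latter is where the coprimality of $q$, $q-1$, $q+1$ does the work — an element common to two centralisers of different types would have order dividing two coprime numbers, hence be trivial — and where the hypothesis $f\ge2$ guarantees that all three orders exceed $1$, so that the classification is genuinely into three non-degenerate families.
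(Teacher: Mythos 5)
Your proof is correct, but it takes a genuinely different route from the paper's. The paper quotes Dickson's classification of the subgroups of $\PSL_2(2^f)$ and observes that, since $x$ is a non-trivial central element of $C_T(x)$, the only entries in Dickson's list that can occur as a centraliser are the abelian ones; transitivity of commuting then follows in one line ($a,b\in C_T(t)$ abelian implies $[a,b]=1$), without ever invoking the centraliser dichotomy of the preceding lemma. You instead work directly in the matrix model $\ssl_2(q)=\PSL_2(q)$: you sort non-identity elements into unipotent, split and non-split semisimple classes, compute the centralisers exactly (Sylow $2$-subgroup of order $q$, tori of orders $q-1$ and $q+1$), show each is self-centralising, and deduce that the punctured centralisers partition $T\setminus\{1\}$, which is precisely the criterion of the preceding lemma. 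Your argument is more self-contained (no appeal to Dickson/Huppert) and proves something slightly stronger, namely that the centralisers form a TI partition of $T$; the price is the explicit case-by-case centraliser computation, including the regularity argument needed to pin the semisimple centralisers down to their tori and the coprimality of $q$, $q-1$, $q+1$ to keep the three families disjoint. The paper's route is shorter but rests on the imported subgroup classification. Both are complete proofs of the lemma.
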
 
\begin{proof}
Let $q=2^f.$ The subgroups of $\PSL_2(q)$ are known by a theorem of Dickson. A complete list of all subgroups of $\PSL_2(2^f)$ can be found in \cite{huppert}. We have the following possibilities:
\begin{enumerate}
\item Elementary-abelian $2$-groups.
\item Cyclic groups of order $z$, where $z$ divides $q\pm 1$. 
\item Dihedral subgroups of order $2z$, where $z$ divides $q\pm 1$. 
\item Alternating groups $\alt(4)$ if $f\equiv 0$ (mod 2).
\item Alternating groups $\alt(5)$ if  $q^2-1 \equiv 0$ (mod 5).
\item Semidirect products $C_2^m \rtimes C_t$ of elementary-abelian groups of order $2^m$ with cyclic groups of
order $t$, where $t$ divides $2^m -1$.
\item Groups $\PSL_2(2^m)$ if $m$ divides $f$ and $\PGL_2(2^m)$ if $2m$ divides $f$.
\end{enumerate}
Take $x\in T\setminus \{1\}$, then $x$ is a non-trivial central element of $C_T(x).$ Hence, the list of all subgroups of $\PSL_2(2^f)$ yields that $C_T(x)$ is an abelian subgroup of $\PSL_2(2^f)$.
Now let $a, b,$ and $t$ be non-trivial elements of $T$ such that $a$ commutes with $t$ and $t$ commutes with $b.$ Since $a$ and $b$ are elements of $C_T(t)$ that is an abelian group, then $a$ and $b$ commute.
\end{proof}

%
%
%
%

In 1957, Suzuki~\cite{Suz}\ showed that a finite non-abelian simple CT-groups is isomorphic to some $\PSL(2,2^f)$ with $f\ge 2$. Hence the following result holds true.
\begin{thm}\label{thm:CT} A finite non-abelian simple group is a CT-group if and
 only if it is isomorphic to $\PSL(2,2^f)$ for some $f\ge 2$.
\end{thm}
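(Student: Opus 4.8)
The plan is to prove Theorem~\ref{thm:CT}, which states that a finite non-abelian simple group is a CT-group if and only if it is isomorphic to $\PSL(2,2^f)$ for some $f\ge 2$. The backward direction is already fully established by Lemma~\ref{lem:PICP}, which shows that each $\PSL(2,2^f)$ with $f\ge 2$ is a CT-group. So the entire content of the theorem is the forward direction: if a finite non-abelian simple group $G$ is a CT-group, then $G\cong \PSL(2,2^f)$ for some $f\ge 2$. This is precisely the classification obtained by Suzuki in 1957, cited in the excerpt as~\cite{Suz}.

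Accordingly, my approach is simply to invoke Suzuki's theorem directly, since the paper explicitly attributes the forward implication to~\cite{Suz}. I would structure the proof as a two-line argument: first note that Lemma~\ref{lem:PICP} gives the ``if'' direction, and then state that the ``only if'' direction is exactly Suzuki's result~\cite{Suz}. There is no genuine mathematical obstacle here to overcome within the scope of this paper, because the hard analytic and group-theoretic work — analysing the centralizer structure under the transitivity-of-commutativity hypothesis and ruling out all finite simple groups other than the $\PSL(2,2^f)$ — was carried out by Suzuki and is being used as a black box. The excerpt's narrative sentence preceding the theorem already signals this: \emph{``In 1957, Suzuki~\cite{Suz} showed that a finite non-abelian simple CT-group is isomorphic to some $\PSL(2,2^f)$ with $f\ge 2$.''}

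If instead one wanted a self-contained argument rather than a citation, the genuine difficulty would lie entirely in the forward direction and would amount to reconstructing Suzuki's classification. The strategy there would be to exploit the CT-property in its equivalent centralizer formulation (established in the lemma preceding Lemma~\ref{lem:PICP}): in a CT-group the distinct centralizers of non-identity elements partition $G\setminus\{1\}$ into a disjoint union of the sets $C_G(x)\setminus\{1\}$, so the group is partitioned by a family of abelian subgroups meeting pairwise trivially. One would then analyse the Sylow structure and the normalizers of these abelian pieces, showing that such a strong partition forces the maximal subgroups to be Frobenius-like and ultimately pins down the order and structure of $G$ to match $\PSL(2,2^f)$. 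This is a substantial piece of finite group theory, and reproducing it is neither necessary nor intended here.

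Given the structure of the excerpt, the expected proof is the short one. The only care needed is to phrase it so that both directions are visibly accounted for — citing Lemma~\ref{lem:PICP} for sufficiency and~\cite{Suz} for necessity — and the main ``obstacle'' is really just bookkeeping: making sure the hypothesis ``non-abelian'' is used (it is essential, since abelian groups are trivially CT-groups but are not among the target list) and that the bound $f\ge 2$ is respected, reflecting that $\PSL(2,2)\cong \sym(3)$ is not simple.
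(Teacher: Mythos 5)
Your proposal matches the paper exactly: the paper derives the ``if'' direction from Lemma~\ref{lem:PICP} and takes the ``only if'' direction directly from Suzuki's 1957 classification~\cite{Suz}, precisely as you describe. No further comparison is needed.
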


\section{Diagonal type}

Let $k\ge 2$ be an integer, and let $T$ be a non-abelian simple group. We set $[k]:=\{1,\dots,k\}$ and 
 \begin{align*}
W(k,T)&:=\{(x_1,...,x_k)\pi \in \Aut(T)\wr \sym(k) \mid x_1\equiv x_i\!\! \mod T\;
\mbox{for all }i\in [k] \},\\
D(k,T)&:=\{(x,\dots, x)\pi \in  \Aut(T)\wr \sym(k) \},\\
\Omega(k,T)&:= D(k,T) / W(k,T),\, \mbox{the set of the right cosets of }D(k,T) \mbox{ in }W(k,T) \\
A(k,T)&:= W(k,T) \cap \Aut(T)^k.
\end{align*}
Observe that $W(k,T) = A(k,T)\rtimes \sym(k)= T^k . (\Out(T) \times \sym(k))$.
The group $G$ is of diagonal type if there exists an integer $k\ge 2$ and a non-abelian simple group $T$ such that $T^k \le G\le W(k,T)$ and $G$ acts primitively on $\Omega(k,T)$. In the whole section we will assume that $G$ is a primitive permutation group of diagonal type. Note that $G$ has socle $T^k$ and degree $|T|^{k-1}$. Recall that $G$ is primitive on $\Omega(k,T)$ when 
$$P_G:=\{\pi \in \sym(k) \mid (g_1,\dots,g_k)\pi\in G\; \mbox{for some }(g_1,\dots,g_k) \in A(k,T) \}\le \sym(k)$$ is primitive on $[k]$ or $k=2$ and $P_G=1.$ If $k=2$ and $P_G=1$ then $G$ has two minimal normal subgroups, hence the group is non-monolithic. In all other cases, the primitive diagonal-type groups are monolithic.

Throughout the section, we shall use without further comments the following notation. 
Given $t, s_2,\dots, s_k\in T$, we set
$$H:=\{x \in \aut(T) \mid (x,\dots, x)\in G\},\; \;\;C(t):=C_H(t),\;\;\mbox{and}\;\; [1,s_2,\dots, s_{k}]:=D(k,T)(1,s_2,\dots, s_{k}).\\
$$

From Remark~\ref{rem:ibisirredundantsize}, it is clear that the study of primitive permutation IBIS groups is intimately related to the base sizes.
For primitive diagonal groups, the minimal base size has been studied in \cite{diagonal}. In the following proposition,  we summarize the results proved in \cite{diagonal} which we shall use in this section.

\begin{prop} \cite{diagonal}
\label{basediag}  The following hold true.
\begin{enumerate}
\item  If the top group $P_G$ does not contain $\alt(k)$, then $b(G) =2$.
\item If $k=2$, then $b(G)=3$ when $P_G =1$, and $b(G)\in \{3,4\}$ otherwise.
\item If $P_G = \Alt(k)$, then $b(G) = 3$ when $k = 2$, and $b(G) = 2$ when $k$ is $3$ or $4$. 
\item If $P_G = \Sym(k),$ then
$b(G)\in\{3, 4\}$ when $k = 2$, and $b(G)\in\{2, 3\}$ when $k$ is $3$ or $4$.
\end{enumerate}
\end{prop}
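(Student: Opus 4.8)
The plan is to fix the base point $\omega_0=D(k,T)\in\Omega(k,T)$, to analyse the action of its stabiliser $G_{\omega_0}=G\cap D(k,T)$ on the remaining points, and to read off $b(G)$ from how quickly this stabiliser can be trivialised. First I would coordinatise $\Omega(k,T)$: each point is a class $[t_1,\dots,t_k]$ with $t_i\in T$, taken modulo the simultaneous right translation $(t_1,\dots,t_k)\mapsto(t_1z,\dots,t_kz)$ (so one may always normalise $t_1=1$), with $\omega_0=[1,\dots,1]$. Since $T^k$ is transitive on $\Omega(k,T)$ one has $G=T^kG_{\omega_0}$, so the image of $G_{\omega_0}$ in $\Sym(k)$ is exactly the top group $P_G$; thus a typical element of $G_{\omega_0}$ is $(x,\dots,x)\pi$ with $x\in\aut(T)$ and $\pi\in P_G$. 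Writing $t^x:=x^{-1}tx$ for the action of $x$, such an element fixes $[1,t_2,\dots,t_k]$ exactly when there is $z\in T$ with $t_{\pi(i)}^x=t_iz$ for all $i$; evaluating at $i=1$ gives $z=t_{\pi(1)}^x$ and turns this into the single relation $t_{\pi(i)}^x=t_i\,t_{\pi(1)}^x$ coupling $x$, $\pi$ and the tuple $(t_2,\dots,t_k)$. This relation is the engine of the whole argument.

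The socle contribution is immediate. Since $\soc(G)\cap D(k,T)$ is the diagonal copy $\{(t,\dots,t):t\in T\}\cong T$, acting by simultaneous conjugation (so $\pi=1$ and $z=1$), the socle stabiliser of $\{\omega_0,[1,t_2,\dots,t_k]\}$ equals $\bigcap_{i=2}^kC_T(t_i)=C_T(\langle t_2,\dots,t_k\rangle)$. Two consequences follow at once. For $k=2$, any single extra point $[1,t_2]$ retains the nontrivial centraliser $C_T(t_2)\supseteq\langle t_2\rangle$ inside the stabiliser, so $b(G)\ge 3$; this is the lower bound in (2), and in the $k=2$ clause of (3). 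For $k\ge 3$, since every nonabelian simple group is $2$-generated and $C_T(T)=Z(T)=1$, I can choose $t_2,\dots,t_k$ generating $T$, so a single point already annihilates the socle. When $P_G=1$ (which forces $k=2$), $G_{\omega_0}=H\le\aut(T)$ acts by conjugation only, and two points $[1,s],[1,s']$ with $\langle s,s'\rangle=T$ leave the stabiliser $C_H(\langle s,s'\rangle)\le C_{\aut(T)}(\inn(T))=1$; combined with $b(G)\ge 3$ this gives $b(G)=3$, proving (2) when $P_G=1$.

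The heart of the proof is the remaining analysis, where one must additionally kill the outer part $x\in H\setminus\inn(T)$ and the top-group part $\pi\ne 1$ in the relation $t_{\pi(i)}^x=t_i\,t_{\pi(1)}^x$. For the equalities $b(G)=2$ — case (1) and the $k\in\{3,4\}$ clause of (3) — I must exhibit a single tuple $(1,t_2,\dots,t_k)$ generating $T$ for which this relation forces $x=1$ and $\pi=1$. I would do this by a fixed-point-ratio count: bound, for each nontrivial $(x,\dots,x)\pi\in G_{\omega_0}$, the number of points it fixes, and show that the sum over all nontrivial elements is strictly less than $|\Omega(k,T)|=|T|^{k-1}$, so that some point is moved by every nontrivial element and forms a base with $\omega_0$. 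Whether this succeeds is governed precisely by the order of $P_G$: when $P_G\not\supseteq\Alt(k)$ the top group is small enough (against $k$ coordinates) for the estimate to hold for all $k$, giving (1); when $P_G\in\{\Alt(k),\Sym(k)\}$ the estimate goes through only for the small values $k=3,4$, which is the dichotomy between (3) and the large-$k$ regime. For the interval statements — $b(G)\in\{3,4\}$ in (2) and $b(G)\in\{2,3\}$ in (4) — only one-sided bounds are needed: the lower bounds are those already obtained ($b(G)\ge 3$ for $k=2$, and $b(G)\ge 2$ always, as $G$ is never regular), while for the upper bounds I would write down an explicit base of the stated maximal size, using one or two points to trivialise the socle and inner part and one further point to destroy the residual $\Out(T)$ and top-group symmetry recorded by the relation.

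The step I expect to be the main obstacle is the $b(G)=2$ analysis. It couples the arithmetic of $T$ (conjugacy classes, centraliser orders and $2$-generation) with the combinatorics of $P_G$ acting on the $k$ coordinates, and the fixed-point estimate must be made uniform over all simple $T$. The subtlest contribution comes from the outer automorphisms $x\in H\setminus\inn(T)$: such an $x$ can fix a point without centralising $\langle t_2,\dots,t_k\rangle$, so controlling its fixed points requires estimates on how a field or graph automorphism shifts elements of $T$, and this is where I expect the bulk of the casework — and any appeal to classification-based bounds on fixed-point ratios — to concentrate.
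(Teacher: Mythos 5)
First, a point of comparison that matters here: the paper does not prove this proposition at all --- it is imported verbatim from Fawcett \cite{diagonal} (``we summarize the results proved in \cite{diagonal}''), so there is no internal argument to measure your attempt against; the relevant benchmark is Fawcett's paper itself. Your setup is the standard and correct one: identify $\Omega(k,T)$ with cosets of $D(k,T)$, compute $G_{\omega_0}$ as the set of elements $(x,\dots,x)\pi$ with $\pi\in P_G$, and extract the fixing relation for $[1,t_2,\dots,t_k]$. The pieces you actually carry out are sound and essentially match what anyone (including this paper, in its later lemmas) does: the lower bound $b(G)\ge 3$ for $k=2$ via the nontrivial diagonal centraliser $C_T(t)$, the exact value $b(G)=3$ when $P_G=1$ via a generating pair of $T$, and the observation that for $k\ge 3$ a single extra point already trivialises the socle part of the stabiliser.

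The gap is that the statements carrying all the content --- $b(G)=2$ in case (1) and in case (3) for $k\in\{3,4\}$, and the upper bounds $b(G)\le 4$ for $k=2$ and $b(G)\le 3$ for $k\in\{3,4\}$ with $P_G=\Sym(k)$ --- are only announced as a programme (``a fixed-point-ratio count'', ``write down an explicit base''), not executed. You have correctly identified Fawcett's method for the $b(G)=2$ cases: bound the number of points fixed by some nontrivial element of $G_{\omega_0}$, via classification-based estimates on fixed point ratios that separate the inner, outer and top-group contributions, with the hypothesis $\Alt(k)\not\le P_G$ entering through the orbit structure of $P_G$. But those estimates are the entire difficulty and occupy essentially all of \cite{diagonal}; nothing in your sketch makes them plausible in a uniform way over all simple $T$. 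Likewise the explicit size-$4$ base for $k=2$ is not automatic: one must simultaneously trivialise the centralising part and empty the inverting part (the sets this paper later calls $I(t)$), which is exactly where the exceptions $\Alt(7)$, $\PSL(2,q)$, $\PSL(3,q)$, $\PSU(3,q)$ in Theorem~\ref{b3} come from. As a blind reconstruction the proposal is an accurate map of the terrain, but the central claims of the proposition remain unproved.
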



\subsection{Diagonal non-monolithic primitive groups} In this subsection, $k=2$ and $P_G=1.$

\medskip 

Here we are going to prove that $G$ is IBIS if and only if 
$G \cong (\PSL(2,2^f))^2,$ for some $f\ge 2$.

\medskip 

It is not difficult to observe that the sequence $([1,1],[1,t_1], [1,t_2],\dots, [1,t_r] )$ is an irredundant sequence for $G$ if and only if the following hold true:
\begin{itemize}
\item $t_1,\dots, t_r$ are non trivial elements of $T;$ 
\item for $1\le i\le r-1$, the subgroup $\underset{1\le j\le i}{\medcap} C(t_j)$ properly contains $ \underset{1\le j \le i+1}{\medcap} C(t_j)$
\item  $ \underset{1\le j \le r}{\medcap} C(t_j)=1.$
\end{itemize}

\begin{prop}\label{prop:solosorpresa0}
	If $k=2$, $P_G=1$  and $G$ is an IBIS group, then $T=(\PSL(2,2^f))^2,$ for some $f\ge 2$.
\end{prop}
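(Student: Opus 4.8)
The plan is to deduce from the IBIS hypothesis that $T$ is a CT-group and then to invoke Theorem~\ref{thm:CT} to conclude $T\cong\PSL(2,2^f)$ for some $f\ge 2$. I first record the base size: since $k=2$ and $P_G=1$, Proposition~\ref{basediag}(2) gives $b(G)=3$, so by Remark~\ref{rem:ibisirredundantsize} the IBIS condition is equivalent to the statement that $G$ has no irredundant base of size $\ge 4$. Using the characterisation of irredundant sequences recalled just before the statement, an irredundant base of the form $([1,1],[1,t_1],\dots,[1,t_r])$ corresponds exactly to a strictly descending chain $H\supsetneq C(t_1)\supsetneq C(t_1)\cap C(t_2)\supsetneq\cdots\supsetneq\bigcap_{j=1}^{r}C(t_j)=1$ with all $t_j\in T\setminus\{1\}$, and such a base has size $r+1$; thus IBIS forbids any such chain with $r\ge 3$.

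Before the main argument I note two facts about the subgroups $C(t)=C_H(t)$, where $H=\{x\in\Aut(T)\mid(x,\dots,x)\in G\}$ contains $\inn(T)\cong T$. First, $\bigcap_{t\in T\setminus\{1\}}C(t)=C_H(\inn(T))=1$, because $T$ is simple with trivial centre and hence $\inn(T)$ is self-centralising in $\Aut(T)$. Secondly, for every $x\in T\setminus\{1\}$ one has $C(x)\cap T=C_T(x)$, so intersecting an identity $C_H(x)=C_H(y)$ with $T$ yields $C_T(x)=C_T(y)$, while $C_H(x)\cap C_H(y)=1$ forces $C_T(x)\cap C_T(y)=1$.

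The heart of the proof is to show that if $T$ is not a CT-group then $G$ admits an irredundant base of size $\ge 4$, contradicting IBIS. Assume $T$ is not CT. By the centraliser characterisation of CT-groups there are $x,y\in T\setminus\{1\}$ with $C_T(x)\ne C_T(y)$ and $C_T(x)\cap C_T(y)\ne 1$. By the second observation these transfer to $H$: indeed $C_T(x)\cap C_T(y)\le C_H(x)\cap C_H(y)$ gives $C_H(x)\cap C_H(y)\ne 1$, while $C_H(x)=C_H(y)$ would force $C_T(x)=C_T(y)$, so $C_H(x)\ne C_H(y)$. After possibly swapping $x$ and $y$ we may assume $C_H(x)\not\subseteq C_H(y)$. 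Setting $t_1=x$, $t_2=y$ and $L:=C_H(x)\cap C_H(y)$ we obtain proper inclusions $H\supsetneq C_H(x)\supsetneq L$ with $L\ne 1$. Since $\bigcap_{t}C(t)=1$, the subgroup $L$ can be shrunk to $1$ by intersecting successively with further centralisers $C(t_3),C(t_4),\dots$ of non-trivial elements, producing a strictly descending chain from $L$ to $1$ with at least one more proper step. The resulting sequence $([1,1],[1,t_1],[1,t_2],[1,t_3],\dots)$ satisfies all three conditions of the irredundant-base characterisation and has $r\ge 3$, hence size $\ge 4$. This contradicts IBIS together with $b(G)=3$. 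Therefore $T$ is a CT-group, and Theorem~\ref{thm:CT} yields $T\cong\PSL(2,2^f)$ for some $f\ge 2$.

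The only delicate point, which I expect to be the main obstacle, is that the centralisers controlling the irredundant bases live in $H$, which may strictly contain $\inn(T)$. It is resolved by the elementary identity $C_H(x)\cap T=C_T(x)$, which shows that both alternatives ``$C_H(x)=C_H(y)$'' and ``$C_H(x)\cap C_H(y)=1$'' are inherited by the $T$-centralisers, so that failure of the CT property for $T$ genuinely propagates to a failure at the level of $H$ and hence to a long chain. A secondary point to verify carefully is that the constructed sequence is a bona fide irredundant base, that is, that the descent from $L$ to the identity can always be realised by intersecting with centralisers of non-identity elements; this is exactly what $\bigcap_{t\in T\setminus\{1\}}C(t)=1$ guarantees.
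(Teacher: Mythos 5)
Your proof is correct and takes essentially the same route as the paper's: assume $T$ is not a CT-group, use the centraliser characterisation to produce $s,t$ with $C(s)\neq C(t)$ and $C(s)\cap C(t)\neq 1$, build the irredundant non-base sequence $([1,1],[1,s],[1,t])$, complete it to an irredundant base of size at least $4$, and contradict $b(G)=3$ via Remark~\ref{rem:ibisirredundantsize} and Theorem~\ref{thm:CT}. The only difference is that you spell out the transfer from $C_T$ to $C_H$, the possible swap of $s$ and $t$, and the completion argument via $\bigcap_{t\neq 1}C(t)=1$, all of which the paper leaves implicit.
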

\begin{proof}
If $\soc(G)$ is not isomorphic to $(\PSL(2,2^f))^2,$ then by Theorem~\ref{thm:CT} we can choose two non trivial elements of $T$, $s$ and $t$, such that $C(t)\cap C(s)\ne 1$ and $C(t)\neq C(s).$ Then $([1,1], [1,s], [1,t])$ is an irredundant sequence that is not a base. In particular we can complete $([1,1], [1,s], [1,t])$ to an irredundant base of $G$, thus there exists an irredundant base of $G$ with size at least $4$.  Since $b(G)=3$ by Proposition \ref{basediag}, it follows from Remark~\ref{rem:ibisirredundantsize} that $G$ is not IBIS.
Therefore we conclude that $\soc(G)=T^2=(\PSL(2,2^f))^2.$
\end{proof}

\begin{prop}If $k=2$, $P_G=1$, $T=\PSL(2,2^f)$ and $G$ is IBIS, then $G=\soc(G)=T^2.$

\end{prop}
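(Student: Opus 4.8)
The plan is to prove the contrapositive-flavoured statement that $G\ne\soc(G)=T^2$ cannot be IBIS, by producing an irredundant base of size at least $4$ whenever $G$ strictly contains $T^2$; since $b(G)=3$ by Proposition~\ref{basediag}(2), Remark~\ref{rem:ibisirredundantsize} then forces $G=T^2$. First I would translate the hypothesis $G\ne T^2$ into a statement about the diagonal subgroup $H=\{x\in\aut(T)\mid(x,x)\in G\}$. Because $P_G=1$ we have $G\le A(2,T)$, so any element of $G$ has the form $(x_1,x_2)$ with $x_1\equiv x_2\bmod\inn(T)$; writing $(x_1,x_2)=(x_1,x_1)(1,x_1^{-1}x_2)$ with $(1,x_1^{-1}x_2)\in T^2\le G$ shows $x_1\in H$, whence $G=\{(x,xc)\mid x\in H,\ c\in T\}$ and $|G|=|H|\,|T|$. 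Consequently $G=T^2$ if and only if $H=\inn(T)$, and it suffices to show that $H\supsetneq\inn(T)$ contradicts the IBIS property.

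Next I would locate a suitable field automorphism inside $H$. Since $q=2^f$ gives $\PGL(2,q)=\PSL(2,q)$, we have $\aut(T)=\PGamL(2,q)=\inn(T)\rtimes\langle\phi\rangle$ with $\phi$ the Frobenius $x\mapsto x^{2}$ of order $f$, and $\Out(T)\cong C_f$. As $\inn(T)\le H$, the quotient $H/\inn(T)$ is the unique cyclic subgroup of $C_f$ of order $e:=|H:\inn(T)|>1$, generated by $\phi^{f/e}\inn(T)$; since this coset lies inside $H$, the genuine field automorphism $\psi:=\phi^{f/e}$ of order $e$ belongs to $H$. The structural input I would invoke is that the fixed-point subgroup of $\psi$ is the subfield subgroup $C_T(\psi)=\PSL(2,2^{f/e})$, which is nonabelian in every case (simple when $f/e\ge2$, and equal to $\PSL(2,2)\cong\sym(3)$ when $e=f$). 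I would therefore choose two non-commuting elements $s,t\in C_T(\psi)\setminus\{1\}$. I expect this paragraph to be the main obstacle: the crux is to confirm that $H$ genuinely contains a field automorphism $\psi$ and that its centralizer in $T$ is nonabelian, which is what supplies a nontrivial element common to two distinct centralizers.

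Finally I would verify the two centralizer conditions for $C(\,\cdot\,)=C_H(\,\cdot\,)$ and invoke the irredundancy criterion. Because $s$ and $t$ are fixed by $\psi$ and $\z(T)=1$, the inner automorphisms $\iota_s,\iota_t$ commute with $\psi$, so $\psi\in C(s)\cap C(t)$ and in particular $C(s)\cap C(t)\ne1$. At the same time $\iota_s\in C(s)$ but $\iota_s\notin C(t)$, since $s$ does not commute with $t$; hence $C(s)\ne C(t)$ and indeed $C(s)\cap C(t)\subsetneq C(s)$. By the criterion stated just before Proposition~\ref{prop:solosorpresa0}, the sequence $([1,1],[1,s],[1,t])$ is then irredundant, but it is not a base because $C(s)\cap C(t)\ne1$; completing it to an irredundant base yields one of size at least $4$. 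Since $b(G)=3$, this exhibits irredundant bases of two different sizes, so $G$ is not IBIS, contradicting the hypothesis. Therefore $H=\inn(T)$ and $G=\soc(G)=T^2$. This last step is routine and exactly parallels the argument of Proposition~\ref{prop:solosorpresa0}.
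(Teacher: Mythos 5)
Your proof is correct and follows essentially the same route as the paper: both arguments exhibit a nontrivial field automorphism in $H$ together with two non-commuting elements of $T$ that it centralizes, so that $([1,1],[1,s],[1,t])$ is irredundant but not a base, contradicting $b(G)=3$. The only cosmetic difference is that the paper picks the explicit $\BBF_2$-matrices $w=\left(\begin{smallmatrix}0&1\\1&0\end{smallmatrix}\right)$ and $z=\left(\begin{smallmatrix}1&1\\0&1\end{smallmatrix}\right)$, fixed by every power of the Frobenius, whereas you appeal to the nonabelian subfield subgroup $C_T(\psi)\cong\PSL(2,2^{f/e})$ — the same idea, since those two matrices already lie in $\PSL(2,2)$.
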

\begin{proof}\label{prop:solosorpresa}
Let $\varphi\in \Aut(T)$ be the automorphism induced by the Frobenius automorphism of the field $\BBF_{2^f}$, so that $\Aut(T)=\langle\varphi\rangle T$. If $G\ne T^2$, then there exists $\varphi^i \in \Aut(T)$ such that $(\varphi^i,\varphi^i)\in G,$ that is  $\varphi^i \in H.$
Consider the following elements of $T$:
$$w:=\begin{pmatrix} 0&1\\1&0\end{pmatrix}\,\;\mbox{and}\, \;z:=\begin{pmatrix} 1&1\\0&1\end{pmatrix}.$$
Note that $w\in C(w)\setminus  C(z).$
Moreover, it is clear that $w^{\varphi^{i}}=w$ and $z^{\varphi^{i}}=z,$ that is $\varphi^i\in C(w)\cap  C(z)$. Hence $([1,1],[1,w],[1,z])$ is an irredundant sequence that is not a base. Hence there exists an irredundant base of $G$ with size at least $4$.  Since $b(G)=3$ by Proposition \ref{basediag}, it follows from Remark~\ref{rem:ibisirredundantsize} that $G$ is not IBIS.
Therefore we conclude that $G=T^2=(\PSL(2,2^f))^2.$
\end{proof}

\begin{prop}\label{prop:sorpresa}
Let $G=T^2=(\PSL_2(2^f))^2,$ where $f\ge2$. Then $G$ is an IBIS group.
\end{prop}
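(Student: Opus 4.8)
The plan is to use the combinatorial characterization of irredundant sequences recorded just before Proposition~\ref{prop:solosorpresa0}, together with the fact that $T=\PSL_2(2^f)$ is a CT-group. By Lemma~\ref{lem:partodadovevoglio} and transitivity of $G$, every irredundant base of $G$ can be translated so that its first point is $[1,1]$; hence it suffices to analyse irredundant sequences of the form $([1,1],[1,t_1],\dots,[1,t_r])$. Since $G=T^2$, the subgroup $H=\{x\in\Aut(T)\mid (x,x)\in G\}$ equals the inner part $T$, so $C(t)=C_H(t)=C_T(t)$ is the ordinary centralizer in $T$. Thus such a sequence is irredundant precisely when the $t_j$ are nontrivial, the chain $T\ge C(t_1)\ge C(t_1)\cap C(t_2)\ge\cdots$ of centralizer intersections is strictly decreasing, and a base precisely when the final intersection is trivial. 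Recall also that $b(G)=3$ by Proposition~\ref{basediag}(2) (here $k=2$ and $P_G=1$), so every irredundant base has size at least $3$; in particular there exists at least one irredundant base, of size $3$.

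By Remark~\ref{rem:ibisirredundantsize}, it remains only to rule out irredundant bases of size $\ge 4$, and this is where the CT-property does all the work. Suppose $([1,1],[1,t_1],[1,t_2],[1,t_3])$ were an irredundant sequence. The first strict inclusion $T\supsetneq C(t_1)$ is automatic because $\PSL_2(2^f)$ has trivial centre, and $C(t_1)$ is a nontrivial proper subgroup. The second strict inclusion gives $C(t_1)\cap C(t_2)\subsetneq C(t_1)$, which forces $C(t_1)\ne C(t_2)$. Since $T$ is a CT-group by Lemma~\ref{lem:PICP}, the equivalent formulation of the CT-condition (distinct centralizers of nontrivial elements meet trivially) yields $C(t_1)\cap C(t_2)=1$. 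But then the third required strict inclusion reads $1=C(t_1)\cap C(t_2)\supsetneq C(t_1)\cap C(t_2)\cap C(t_3)$, which is impossible.

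Consequently no irredundant sequence based at $[1,1]$ can have length exceeding $3$, so every irredundant base of $G$ has size exactly $3=b(G)$, and $G$ is IBIS by Remark~\ref{rem:ibisirredundantsize}. I do not expect a genuine obstacle in this argument: the only substantive ingredient is the collapse of the centralizer chain, and the crux is recognizing that the CT-property of $\PSL_2(2^f)$ forces the chain of centralizer intersections to drop to the identity in a single step immediately after its first proper descent, thereby capping the length of any irredundant sequence. The routine points to verify carefully are the identification $H=T$ and $C(t)=C_T(t)$, and the triviality of the centre of $T$ ensuring $C(t_1)\ne T$.
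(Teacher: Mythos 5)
Your proof is correct and follows essentially the same route as the paper's: both reduce to irredundant bases starting at $[1,1]$, invoke $b(G)=3$ from Proposition~\ref{basediag}, and use the CT-property of $\PSL_2(2^f)$ to force $C_T(t_1)\cap C_T(t_2)=1$ as soon as the centralizers differ, which caps the base size at $3$. The only cosmetic difference is that you argue by contradiction on a length-$4$ sequence while the paper directly bounds the size of an arbitrary irredundant base.
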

\begin{proof} Since $b(G)=3$  by Proposition \ref{basediag}, it suffices to show that each irredundant base has size 3. Denoting by $\BB$ an irredundant base,
by Lemma~\ref{lem:partodadovevoglio} we can assume that $[1,1]$ is the first element of $\BB.$ Since $|\BB|\ge 3,$ there exist  two different non-trivial elements $s$ and $t$ of $T$, such that $[1,t]$ and $[1,s]$ are in $\BB.$ 
Moreover, since $\BB$ is irredundant, then $C_T(t)\ne C_T(s)$. Hence from Theorem~\ref{thm:CT}, we deduce that $C_T(t)\cap C_T(s)=1$ and so $\BB$ has size 3.
\end{proof}

\

\subsection{Diagonal monolithic primitive groups}
Here we are going to show that the monolithic primitive groups of diagonal type are not IBIS group. Recall that if a group $G$ of diagonal type with $T^k\le G\le  W(k,T)$ is monolithic, then $P_G$ is primitive on its action on $k$ points. Throughout the subsection, we shall assume that $G$ is monolithic without further mentioning.

\

First we assume $k=2.$ In this case $T^2 \le G\le W(2,T)$
and $P_G=\langle\sigma\rangle,$ where $\sigma:=(1,2).$ 
Note that there exists $y\in \aut(T)$ such that $(y,y)\sigma \in G.$ Recall that $H=\{z\in\Aut(T)|(z,z)\in G\}$ and consider the coset $Hy.$ Since $((y,y)\sigma)^2=(y^2,y^2)$, we have that $y^2\in H.$ For any $x\in \aut(T)$,  we get that $(x,x)\sigma\in G$ if and only if $Hx=Hy.$ Further, notice that $Hy=H$ if and only if $\sigma \in G.$ 

For any $t \in T$, define
$$I(t):=\{x\in yH\mid x^{-1}tx=t^{-1} \}.$$

\begin{lemma}
Let $t_1,\dots,t_r\in T$ and $\BB=([1,1],[1,t_1],\dots,[1,t_r])$.
Then $\BB$ is a base for $G$ if and only if  $$\underset{1\le i\le r}{\medcap} C(t_i)=1 \text { and }\underset{1\le i\le r}{\medcap}I(t_i)=\emptyset.$$
Moreover,
$\BB$ is an irredundant sequence if and only if $t_1,\dots,t_r $ are non-trivial elements of $T$ and, for $1\le j\le r-1$, either the set $\underset{1\le i\le j}{\medcap} C(t_i)$  properly contains  $\underset{1\le i\le j+1}{\medcap} C(t_i)$ or the set $\underset{1\le i\le j}{\medcap} I(t_i)$  properly contains  $\underset{1\le i\le j+1}{\medcap} I(t_i)$. 
\end{lemma}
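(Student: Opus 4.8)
The plan is to compute the pointwise stabiliser of a prefix $([1,1],[1,t_1],\dots,[1,t_j])$ explicitly, and then read off both assertions from the resulting descending chain of stabilisers. First I would identify the stabiliser of the base point $[1,1]=D(2,T)$. Acting on right cosets, $G_{[1,1]}=G\cap D(2,T)$; since $(x,x)\in G$ exactly when $x\in H$ and $(x,x)\sigma\in G$ exactly when $x\in yH$, this gives
$$G_{[1,1]}=\{(x,x)\mid x\in H\}\cup\{(x,x)\sigma\mid x\in yH\}.$$
(Conjugating $(h,h)$ by $(y,y)\sigma$ shows that $y$ normalises $H$, so $yH=Hy$ and the second piece is genuinely the coset occurring in the definition of $I(t)$.) Next, for a fixed $t\in T$, a short multiplication in $W(2,T)$ shows that $D(2,T)(1,t)(x,x)=D(2,T)(1,t)$ iff $x$ commutes with $t$, while $D(2,T)(1,t)(x,x)\sigma=D(2,T)(1,t)$ iff $x^{-1}tx=t^{-1}$. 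Hence $(x,x)$ fixes $[1,t]$ iff $x\in C(t)$, and $(x,x)\sigma$ fixes $[1,t]$ iff $x\in I(t)$. Intersecting these conditions over $i=1,\dots,j$ yields the key formula
$$G_{([1,1],[1,t_1],\dots,[1,t_j])}=\{(x,x)\mid x\in C_j\}\cup\{(x,x)\sigma\mid x\in I_j\},$$
where $C_j:=\underset{1\le i\le j}{\medcap}C(t_i)$ and $I_j:=\underset{1\le i\le j}{\medcap}I(t_i)$, with the conventions $C_0=H$ and $I_0=yH$. From this the base criterion is immediate: $\BB$ is a base iff the level-$r$ stabiliser is trivial, and since $1\in C_r$ the identity $(1,1)$ always lies in the first component, so triviality is equivalent to $\underset{1\le i\le r}{\medcap}C(t_i)=1$ together with the $\sigma$-component being empty, i.e. $\underset{1\le i\le r}{\medcap}I(t_i)=\varnothing$.

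For irredundancy I would track the chain $G>G_{[1,1]}=S_0\supseteq S_1\supseteq\dots\supseteq S_r$, where $S_j$ denotes the stabiliser of the length-$(j{+}1)$ prefix; the sequence is irredundant precisely when every inclusion is strict. The step $G>S_0$ is automatic by transitivity. Since the first component and the $\sigma$-component are disjoint and the families $(C_j)$, $(I_j)$ are descending, $S_{j-1}\supsetneq S_j$ holds iff $C_{j-1}\supsetneq C_j$ or $I_{j-1}\supsetneq I_j$; for $j\ge2$ this is exactly the stated alternative after reindexing $j\mapsto j-1$. The only step needing separate treatment is $S_0\supsetneq S_1$: here I would use $T\cong\inn(T)\le H$ (which holds because $T^2\le G$) together with $\Ze(T)=1$, so that $t_1\ne1$ forces some inner automorphism to move $t_1$, whence $C(t_1)\subsetneq H$ and the first component already shrinks, while $t_1=1$ leaves $S_1=S_0$. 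Thus this step is strict iff $t_1\ne1$, and the alternatives at the later steps force $t_2,\dots,t_r\ne1$ as well; together these account for the non-triviality clause.

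The main obstacle is the bookkeeping in the two coset computations producing the fixing conditions, in particular obtaining the inversion $x^{-1}tx=t^{-1}$ in the $\sigma$-case correctly and checking $yH=Hy$ so that $I(t)$ is well defined. Once the stabiliser formula is in place, the base and irredundancy criteria follow formally from the disjointness of the two components and the monotonicity of the two families $(C_j)$ and $(I_j)$.
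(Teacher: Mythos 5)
Your proposal is correct and follows essentially the same route as the paper: identify $G_{[1,1]}$ as $\{(h,h)\mid h\in H\}\cup\{(x,x)\sigma\mid x\in yH\}$ and observe that $(h,h)$ fixes $[1,t_i]$ iff $h\in C(t_i)$ while $(x,x)\sigma$ fixes $[1,t_i]$ iff $x\in I(t_i)$, from which both criteria follow by the disjointness and monotonicity of the two components. Your write-up merely makes explicit the coset computations and the first-step strictness argument that the paper leaves to the reader.
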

\begin{proof}We have $G_{[1,1]}=\{(h,h), (x,x)\sigma \mid h\in H, x\in yH\}.$ To conclude it is suffices to notice
that $(h,h)\in G_{[1,t_i]}$ if and only if $h\in  C(t_i),$ while $(x,x)\sigma \in   G_{[1,t_i]}$ if and only if $x\in I(t_i).$
\end{proof}
The following result will play an important role in investigating the case $k=2$.

\begin{thm}\cite[Theorem 1.1]{LeLi}\label{b3} Let $T$ be a non-abelian finite simple group, that is not $\Alt(7),$ $\PSL(2,q),$
$\PSL(3,q)$ nor $\PSU(3,q).$ Then there exist $x, t\in T$ such that the following hold:
\begin{itemize}
\item[(i)] $T = \langle x, t\rangle;$
\item[(ii)] $t$ is an involution;
\item[(iii)] there is no involution $\alpha\in\Aut(T)$ such that $x^\alpha=x^{-1},$ $t^\alpha=t.$
\end{itemize}
\end{thm}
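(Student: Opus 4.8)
The statement is quoted from~\cite{LeLi}, so here I only outline the strategy I would follow to establish it; the argument unavoidably rests on the Classification of Finite Simple Groups. The guiding observation is that condition~(iii) asks that no involution of $\Aut(T)$ simultaneously inverts $x$ and centralises $t$. Writing $\mathcal{I}(x):=\{\alpha\in\Aut(T)\mid \alpha^2=1,\ x^\alpha=x^{-1}\}$, condition~(iii) is exactly the assertion $\mathcal{I}(x)\cap C_{\Aut(T)}(t)=\emptyset$. There are two natural ways to force this: either arrange $\mathcal{I}(x)=\emptyset$, i.e. choose $x$ not inverted by \emph{any} involution of $\Aut(T)$ (so that (iii) holds for every involution $t$), or else control the inverting involutions of $x$ explicitly and pick $t$ avoiding all of them. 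The plan is thus, family by family, to produce a pair $(x,t)$ with $t$ an involution, $\langle x,t\rangle=T$, and one of these two mechanisms in force; simultaneously one wants the analysis to reveal why the four listed groups are genuine exceptions.

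For the alternating groups $\Alt(n)$ with $n\ge 5$, $n\ne 7$, the first mechanism is unavailable, since for $n\ne 6$ every element of $\Sym(n)=\Aut(\Alt(n))$ is a product of two involutions and hence strongly real; thus $\mathcal{I}(x)\ne\emptyset$ always. Instead I would take $x$ to be a long cycle (an $n$-cycle for $n$ odd, or a product of an $(n-1)$-cycle with a transposition for $n$ even) chosen so that $\langle x,t\rangle=\Alt(n)$, observe that the involutions inverting $x$ form an explicit family of ``reflections'' with prescribed supports, and then select an involution $t$ whose cycle structure is incompatible with all of them. This is a uniform cycle-structure argument for large $n$ together with a finite check for small $n$; the failure of this balancing act is what should isolate $\Alt(7)$ as an exception.

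For the groups of Lie type the first mechanism becomes available outside the excluded low-rank cases $\PSL(2,q)$, $\PSL(3,q)$, $\PSU(3,q)$: I would take $x$ to be a regular semisimple element in a maximal torus on which no involution of $\Aut(T)$ acts by inversion, so that $\mathcal{I}(x)=\emptyset$ and (iii) is automatic for any $t$. That such an $x$ can be chosen to generate $T$ with a well-placed involution $t$ should follow from the known probabilistic and constructive results on generation of simple groups by an involution and a further element, combined with Aschbacher's description of the maximal subgroups to exclude $\langle x,t\rangle$ from any proper overgroup. The excluded families are exactly those of rank too small to host a non-inverted torus element, and one must verify by a direct study of their involution centralisers and real classes that they genuinely fail. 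The sporadic groups and the residual small cases I would dispatch by direct computation from conjugacy-class data (the \textsc{Atlas} and \MAGMA).

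The main obstacle throughout is condition~(iii): the generation requirement pushes $x$ toward generic regular elements, whereas (iii) constrains the conjugacy class of $x$ relative to $C_{\Aut(T)}(t)$, and reconciling the two is most delicate in the low-rank classical groups. This tension is precisely why $\PSL(2,q)$, $\PSL(3,q)$, $\PSU(3,q)$ (together with $\Alt(7)$) must appear as exceptions, and the hard part of the proof is showing that for every other simple group the two constraints can be met at once.
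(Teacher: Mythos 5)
This statement is not proved in the paper at all: it is imported verbatim from Leemans--Liebeck \cite[Theorem 1.1]{LeLi} and used as a black box (in Lemma~\ref{l1}), so there is no internal proof to compare yours against. Judged on its own terms, what you have written is a plan, not a proof: every substantive step is deferred (``I would take'', ``should follow from''), no explicit pair $(x,t)$ is produced in any family, generation is never verified against a list of maximal subgroups, and condition~(iii) is never checked for a single concrete choice. The reformulation of (iii) as $\mathcal{I}(x)\cap C_{\Aut(T)}(t)=\emptyset$ and the two mechanisms you isolate (either $\mathcal{I}(x)=\emptyset$, or control the inverting involutions of $x$ and dodge them with $t$) do correctly capture the shape of the actual argument in \cite{LeLi}, and your observation that $\mathcal{I}(x)\neq\emptyset$ for alternating groups (every permutation being a product of two involutions) is sound. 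But the genuinely hard content of the theorem lies exactly in the parts you skip: exhibiting, group by group via the Classification, a regular semisimple element in a suitable torus together with an involution generating $T$, and proving that the four listed groups really are exceptions rather than merely ``plausibly'' so. Your claim that the exceptional families are ``exactly those of rank too small to host a non-inverted torus element'' is a heuristic, not an argument --- for instance it does not by itself explain $\Alt(7)$, nor does it show that $\PSL(3,q)$ and $\PSU(3,q)$ fail for \emph{every} $q$. So as a proof attempt this has a genuine gap: nothing is actually established beyond the (correct) restatement of what must be shown.
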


%

\begin{lemma}\label{l1}
Assume that $T^2 \le G\le W(2,T)$, with $T$ as in Theorem \ref{b3}. Then $b(G)=3.$  
\end{lemma}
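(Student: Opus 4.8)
The plan is to exhibit a base of size $3$; this suffices, because in the monolithic case with $k=2$ we have $P_G=\langle\sigma\rangle\neq 1$, so Proposition~\ref{basediag}(2) already gives $b(G)\in\{3,4\}$, hence $b(G)\geq 3$. Concretely, I would take the pair $x,t\in T$ provided by Theorem~\ref{b3} and show that $\BB=([1,1],[1,x],[1,t])$ is a base by verifying the two conditions of the lemma preceding Theorem~\ref{b3}, namely $C(x)\cap C(t)=1$ and $I(x)\cap I(t)=\emptyset$.

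The centralizer condition is the easy half. Since $T=\langle x,t\rangle$ by Theorem~\ref{b3}(i), any automorphism of $T$ fixing both $x$ and $t$ fixes every word in $x$ and $t$, hence fixes $T$ pointwise and is therefore trivial. As $C(x),C(t)\leq\Aut(T)$, this yields $C(x)\cap C(t)\leq C_{\Aut(T)}(x)\cap C_{\Aut(T)}(t)=1$.

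The condition $I(x)\cap I(t)=\emptyset$ is the heart of the argument and the step I expect to be the main obstacle. Suppose for contradiction that some $\alpha$ lies in $I(x)\cap I(t)$, so that $\alpha\in yH$, $x^{\alpha}=x^{-1}$ and $t^{\alpha}=t^{-1}=t$, the last equality holding because $t$ is an involution by Theorem~\ref{b3}(ii). The idea is to pass to $\alpha^{2}$: on the one hand $x^{\alpha^{2}}=x$ and $t^{\alpha^{2}}=t$, so $\alpha^{2}$ centralizes both $x$ and $t$; on the other hand, since $y$ normalizes $H$ (so that $yH=Hy$, obtained by conjugating $(h,h)$ by $(y,y)\sigma\in G$) and $y^{2}\in H$, one gets $\alpha^{2}\in (yH)(yH)=y^{2}H=H$. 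Combining these with the centralizer computation of the previous paragraph forces $\alpha^{2}\in C(x)\cap C(t)=1$, i.e. $\alpha$ is an involution or the identity. Here Theorem~\ref{b3}(iii) comes into play: $x$ cannot be an involution, for otherwise $T=\langle x,t\rangle$ would be generated by two involutions and thus dihedral, contradicting that $T$ is non-abelian simple; hence $x\neq x^{-1}$, so $\alpha\neq 1$ and $\alpha$ is a genuine involution of $\Aut(T)$ with $x^{\alpha}=x^{-1}$ and $t^{\alpha}=t$. This contradicts Theorem~\ref{b3}(iii), so $I(x)\cap I(t)=\emptyset$.

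Putting the two conditions together, $\BB=([1,1],[1,x],[1,t])$ is a base of size $3$ by the lemma preceding Theorem~\ref{b3}, whence $b(G)\leq 3$ and therefore $b(G)=3$. The only delicate points are the bookkeeping with the coset $yH$ (using $yH=Hy$ and $y^{2}\in H$ to land $\alpha^{2}$ back in $H$) and the observation that $x$ is not an involution, both of which are exactly what is needed to upgrade "$\alpha^{2}=1$" into the involution required to invoke hypothesis (iii).
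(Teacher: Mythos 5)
Your proof is correct and follows the same route as the paper: take $x,t$ from Theorem~\ref{b3}, show $([1,1],[1,x],[1,t])$ is a base via the criterion $C(x)\cap C(t)=1$ and $I(x)\cap I(t)=\emptyset$, and get the lower bound from Proposition~\ref{basediag}. The paper simply asserts the two conditions without justification, whereas you supply the (correct) verification that any $\alpha$ inverting $x$ and fixing $t$ would be a nontrivial involution of $\Aut(T)$, contradicting Theorem~\ref{b3}(iii).
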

\begin{proof}
Let $T=\langle x,t\rangle,$ with $x,t$ as in Theorem \ref{b3} and let $\BB:=([1,1], [1,x], [1,t])$.
Note that $C_{\Aut(T)}(x)\cap C_{\Aut(T)}(t)=1$ and $I(x)\cap I(t)=\emptyset,$ hence $\BB$ is a base for $G$, that is $b(G)\le 3.$ 
By Proposition \ref{basediag}, $b(G)\ge 3,$ so the result follows.
\end{proof}
%
%
%

\begin{prop}\label{p1} 
Assume that $T^2 \le G\le W(2,T)$, with $T\neq \PSL(2,2^f)$ for every $f\ge 2$.
If $b(G)\le 3$, then $G$ is not an IBIS group.
\end{prop}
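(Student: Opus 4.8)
The plan is to reuse, in the monolithic setting, the exact mechanism of Proposition~\ref{prop:solosorpresa0}: produce an irredundant sequence of the form $([1,1],[1,s],[1,t])$ that is \emph{not} a base, complete it to an irredundant base of size at least $4$, and then contradict the IBIS property via Remark~\ref{rem:ibisirredundantsize}. First I would pin down the base size: here $P_G=\langle\sigma\rangle\neq 1$, so Proposition~\ref{basediag}(2) gives $b(G)\in\{3,4\}$, and together with the hypothesis $b(G)\le 3$ this forces $b(G)=3$. Thus it suffices to exhibit one irredundant base of size $\ge 4$.

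The candidate elements come from the CT-dichotomy. Since $T\neq\PSL(2,2^f)$ for every $f\ge 2$, Theorem~\ref{thm:CT} tells us that $T$ is \emph{not} a CT-group, so there exist non-trivial $s,t\in T$ with $C_T(s)\cap C_T(t)\neq 1$ and $C_T(s)\neq C_T(t)$; after relabelling I may assume $C_T(s)\not\subseteq C_T(t)$. The crucial structural input is that $T^2\le G$, which forces the diagonal copy $\inn(T)\cong T$ to lie in $H$. Hence $C(s)=C_H(s)\supseteq C_T(s)$ and likewise $C(t)\supseteq C_T(t)$, so that $C(s)\cap C(t)\supseteq C_T(s)\cap C_T(t)\neq 1$; moreover any element of $C_T(s)\setminus C_T(t)$ lies in $C(s)\setminus C(t)$ (commuting in $T$ is the same as commuting in $\Aut(T)$ for inner elements), giving $C(s)\supsetneq C(s)\cap C(t)$.

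With these two inequalities I would invoke the base/irredundancy criterion for $k=2$ proved just above: the condition $C(s)\supsetneq C(s)\cap C(t)$ makes $([1,1],[1,s],[1,t])$ irredundant, while $C(s)\cap C(t)\neq 1$ shows $\underset{1\le i\le 2}{\medcap}C(t_i)\neq 1$, so the sequence is not a base. I would then complete this irredundant, non-base sequence to an irredundant base $\BB$, which necessarily satisfies $|\BB|\ge 4 > 3 = b(G)$; by Remark~\ref{rem:ibisirredundantsize} this proves that $G$ is not IBIS.

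The one point that needs care, and which is exactly what keeps the argument short, is the interplay with the twisted sets $I(t_i)$: in the monolithic case both the ``base / not-a-base'' test and the irredundancy test involve $C(t_i)$ \emph{and} $I(t_i)$, and a priori a non-empty intersection $\medcap I(t_i)$ could interfere. The hard part, then, is to arrange matters so that the sets $I(s),I(t)$ play no role — that is, to obtain both the failure of base-ness and the irredundancy purely on the centralizer side. This is achieved by taking the non-CT witnesses from $T$ itself and transferring the inequalities $C_T(s)\cap C_T(t)\neq 1$ and $C_T(s)\not\subseteq C_T(t)$ to $C=C_H$ via $\inn(T)\le H$; beyond this transfer and the standard fact that every irredundant non-base sequence extends to an irredundant base, I expect no genuine obstacle.
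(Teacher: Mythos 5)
Your proposal is correct and follows essentially the same route as the paper: use Theorem~\ref{thm:CT} to find non-trivial $s,t\in T$ with $C_T(s)\cap C_T(t)\neq 1$ and $C_T(s)\neq C_T(t)$, observe that $([1,1],[1,s],[1,t])$ is an irredundant non-base sequence, complete it to an irredundant base of size at least $4$, and conclude via Remark~\ref{rem:ibisirredundantsize}. Your added care about transferring the centralizer conditions from $T$ to $H$ via $T\le H$ and about why the sets $I(s),I(t)$ cannot interfere only makes explicit what the paper's terser proof leaves implicit.
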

\begin{proof}
By Theorem~\ref{thm:CT}, there exist two nonidentity elements $s$ and $t$ in $T$ such that $C_T(t)\cap C_T(s)\ne 1$  and  $C_T(t)\neq C_T(s).$ Then $([1,1], [1,s], [1,t])$ is an irredundant sequence that is not a base. In particular we can complete $([1,1], [1,s], [1,t])$ to an irredundant base of $G$, thus there exists an irredundant base of $G$ with size at least $4$.  Since we are assuming $b(G)\le 3,$ the result  follows from Remark~\ref{rem:ibisirredundantsize}.
\end{proof}

Combining Lemma \ref{l1} and Proposition \ref{p1} we get the following:

\begin{cor} \label{cor:generatoribelli} Let $T^2 \le G\le W(2,T)$, with $T$  as in Theorem \ref{b3}. Then  $G$ is not an IBIS group.\end{cor}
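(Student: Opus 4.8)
The plan is to obtain the statement as an immediate consequence of the two results preceding it, namely Lemma~\ref{l1} and Proposition~\ref{p1}. The single observation that glues them together is that the class of simple groups isolated by Theorem~\ref{b3} is disjoint from the family $\{\PSL(2,2^f)\mid f\ge 2\}$ on which the IBIS property of diagonal-type groups genuinely lives, so that the two lemmas cover complementary hypotheses with no gap.

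First I would record the trivial but essential compatibility check: a group $T$ satisfying the hypotheses of Theorem~\ref{b3} is, by the very statement of that theorem, not isomorphic to $\PSL(2,q)$ for any prime power $q$. Since $\PSL(2,2^f)$ is a particular instance of $\PSL(2,q)$, this gives at once that $T\neq \PSL(2,2^f)$ for every $f\ge 2$, so the hypothesis on $T$ demanded by Proposition~\ref{p1} is satisfied.

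Next, because $T$ is exactly of the type described in Theorem~\ref{b3}, Lemma~\ref{l1} applies without modification and yields $b(G)=3$; in particular $b(G)\le 3$.

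Finally I would invoke Proposition~\ref{p1}: with both $T\neq \PSL(2,2^f)$ (for all $f\ge 2$) and $b(G)\le 3$ established, that proposition concludes that $G$ is not an IBIS group, which is precisely the assertion. There is essentially no obstacle at this stage beyond confirming that the two hypotheses line up as above; the genuine content has already been discharged upstream, in Lemma~\ref{l1} (where the well-chosen generators $x,t$ of Theorem~\ref{b3} produce a base of size $3$, the condition (iii) on involutions guaranteeing $I(x)\cap I(t)=\emptyset$) and in Proposition~\ref{p1} (where the failure of the CT-property supplied by Theorem~\ref{thm:CT} produces an irredundant non-base of length $3$ that extends to an irredundant base of length at least $4$, so that Remark~\ref{rem:ibisirredundantsize} forbids the IBIS property once $b(G)\le 3$).
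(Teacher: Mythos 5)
Your proposal is correct and matches the paper's own argument exactly: the corollary is obtained by combining Lemma~\ref{l1} (which gives $b(G)=3$) with Proposition~\ref{p1} (which applies since Theorem~\ref{b3} explicitly excludes $\PSL(2,q)$, hence $T\neq\PSL(2,2^f)$). The compatibility check you spell out is implicit in the paper but is the right and only point to verify.
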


\

In what follows, we are going to show that also when $T\in\{ \Alt(7),\PSL(2,q),\PSL(3,q),\PSU(3,q)\}$, the group  $T^2 \le G\le W(2,T)$ is not IBIS. 

\begin{prop}\label{prop:A7}
 If $(\alt(7))^2\le G\le W(2,\alt(7)),$ then $G$ is not an IBIS group.\end{prop}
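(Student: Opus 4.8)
The goal is to show that $G$ with $(\alt(7))^2 \le G \le W(2,\alt(7))$ is not IBIS. Since $k=2$ and we are in the monolithic case, $P_G = \langle \sigma\rangle$ with $\sigma = (1,2)$, and by Proposition~\ref{basediag} we have $b(G) \in \{3,4\}$. By Remark~\ref{rem:ibisirredundantsize}, it suffices to exhibit an irredundant base of size differing from $b(G)$, or more directly to produce two irredundant bases of different sizes. The plan is to exploit the failure of the CT-property in $T = \alt(7)$: since $\alt(7) \ne \PSL(2,2^f)$, Theorem~\ref{thm:CT} guarantees nonidentity elements $s,t$ with $C_T(s) \cap C_T(t) \ne 1$ but $C_T(s) \ne C_T(t)$.

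First I would try to mimic Proposition~\ref{p1}, building an irredundant sequence $([1,1],[1,s],[1,t])$ that fails to be a base and then completing it to an irredundant base of size at least $4$; simultaneously I would produce a base of size $3$, forcing two distinct irredundant-base sizes. The subtlety here, compared with the non-monolithic case, is that the stabilizer $G_{[1,1]}$ contains not only the diagonal elements $(h,h)$ with $h \in H = C_{\Aut(T)}$-type subgroup, but also the ``swap'' elements $(x,x)\sigma$ with $x \in yH$. Consequently, as the preceding Lemma records, a sequence is a base precisely when $\bigcap C(t_i) = 1$ \emph{and} $\bigcap I(t_i) = \emptyset$, and irredundancy can be witnessed by a strict decrease in \emph{either} the $C$-chain or the $I$-chain. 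So I must track both invariants and cannot simply transplant the centralizer argument from Proposition~\ref{prop:solosorpresa}.

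Concretely, the key computational step is to locate, inside $T = \alt(7)$, explicit elements realizing the centralizer coincidences and to control the sets $I(t)$ for the relevant $t$; this is where I expect the main obstacle to lie, and it is plausibly handled by a direct \MAGMA\ computation enumerating irredundant bases for each of the finitely many groups $G$ with $(\alt(7))^2 \le G \le W(2,\alt(7))$ (there are only a handful, indexed by $H$ and the choice of whether $\sigma \in G$). The cleanest argument is: first show $b(G) = 3$ (by producing a generating pair $x,t$ of $\alt(7)$ with $C(x) \cap C(t) = 1$ and $I(x) \cap I(t) = \emptyset$, noting that the involution-inversion obstruction in Theorem~\ref{b3}(iii) is exactly what $I(t) = \emptyset$-type conditions encode, and checking it directly since $\alt(7)$ is excluded from that theorem), and then exhibit an irredundant sequence of length $\ge 4$ using the CT-failure pair $s,t$, whose intersection of centralizers is nontrivial so the sequence extends.

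The comparison with $b(G) = 3$ then yields via Remark~\ref{rem:ibisirredundantsize} that $G$ is not IBIS. I would organize the proof as: (1) fix the CT-failure elements $s,t \in \alt(7)$ and verify $1 \ne C(s)\cap C(t)$, $C(s) \ne C(t)$; (2) check that $([1,1],[1,s],[1,t])$ is irredundant but not a base, using the Lemma's criterion on both $C$ and $I$; (3) extend it to an irredundant base, giving size $\ge 4$; (4) separately confirm $b(G) = 3$, so that irredundant bases of sizes $3$ and $\ge 4$ coexist. The genuinely delicate point is step~(2)--(3) in the presence of the $I$-sets: I must ensure that the nontrivial common centralizer element does not secretly force $\bigcap I(t_i)$ to collapse to $\emptyset$ prematurely, and that the extension can be carried out while keeping irredundancy; absent a clean structural reason, verifying this for $\alt(7)$ by explicit computation is the safest route.
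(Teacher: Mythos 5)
Your overall strategy (produce two irredundant bases of different sizes and invoke Remark~\ref{rem:ibisirredundantsize}) is the right one, and your step (2)--(3) -- a length-$3$ irredundant non-base sequence built from a CT-failure pair, extended to an irredundant base of size at least $4$ -- is sound. But the argument only yields a contradiction if you also know $b(G)=3$, and that is exactly the point you do not establish. Proposition~\ref{basediag} only gives $b(G)\in\{3,4\}$ for $k=2$ with $P_G=\Sym(2)$, and the paper's only mechanism for pinning down $b(G)=3$ in the monolithic $k=2$ case is Lemma~\ref{l1}, which rests on Theorem~\ref{b3} -- a theorem from which $\Alt(7)$ is explicitly excluded. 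That exclusion is a warning, not a formality: the witness you propose (a generating pair $x,t$ with $C(x)\cap C(t)=1$ \emph{and} $I(x)\cap I(t)=\emptyset$, uniformly over the several possible cosets $yH$) is precisely the kind of configuration whose existence fails or is delicate for the exceptional groups in that theorem. Deferring this to an unspecified \MAGMA\ computation leaves the crux of the proof undone; if $b(G)=4$ for any of the admissible $G$, your size-$\ge 4$ irredundant base is perfectly consistent with $G$ being IBIS and the argument collapses.

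The paper sidesteps this entirely by going one step deeper: in each of the two cases ($(1,2)\in yH$ or $H=Hy=\Alt(7)$) it exhibits an explicit \emph{length-$4$} irredundant sequence that is not a base -- e.g.\ $([1,1],[1,s_1],[1,s_2],[1,s_3])$ with $s_i=(1,2)(3,i+3)$, where the centralizer chain $C(s_1)>C(s_1)\cap C(s_2)>C(s_1)\cap C(s_2)\cap C(s_3)$ gives irredundancy and the single element $(1,2)\in I(s_1)\cap I(s_2)\cap I(s_3)$ prevents it from being a base. Completing it gives an irredundant base of size at least $5$, which beats the crude bound $b(G)\le 4$ no matter which value $b(G)$ takes. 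If you want to salvage your route, you must either prove $b(G)=3$ for every admissible $G$ here (a genuine claim requiring its own verification), or upgrade your non-base sequence from length $3$ to length $4$ as the paper does.
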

\begin{proof}

If 
$(1,2)\in yH$ then $((1,2),(1,2))\sigma\in G.$ In this case, 
consider $s_1=(1,2)\,(3,4),$ $s_2=(1,2)\,(3,5)$ and $s_3=(1,2)\,(3,6).$ Since
 $(1,2)\in I(s_1)\cap I(s_2) \cap I(s_3)$ and $C_{\alt(7)}(\langle s_1,s_2,s_3\rangle) < C_{\alt(7)}(\langle s_1,s_2\rangle < C_{\alt(7)}(s_1)),$ then $([1,1], [1,s_1],[1,s_2], [1,s_3])$ is an irredundant sequence of $G$ that is not a base.
%

Notice that $(1,2)\notin yH$ if and only if $H=\alt(7)= Hy.$
In this case, consider $t_1=(1,2,3),$ $t_2=(1,2,4),$ and $t_3=(1,2,5).$ Since
$(1,2)(6,7)\in I(t_1)\cap I(t_2) \cap I(t_3)$ and $C_{\alt(7)}(\langle t_1,t_2,t_3\rangle)< C_{\alt(7)}(\langle t_1,t_2\rangle)<  C_{\alt(7)}(t_1),$ then $([1,1], [1,t_1],[1,t_2], [1,t_3])$ is an irredundant sequence of $G$ that is not a base.

  In particular, in both cases, completing the irredundant sequences to a base of $G$, we find an irredundant base of $G$ with size at least $5$. By Proposition~\ref{basediag} $b(G)\le 4,$ hence the result follows from Remark~\ref{rem:ibisirredundantsize}.
\end{proof}

For $T=\PSL(2,q)$ we need to work differently according to the parity of $q$. Since $\PSL(2,4)\cong \PSL(2,5)\cong\alt(5),$ in order to simplify the investigation, we deal with $T=\alt(5)$ saparately.

\begin{prop}\label{prop:A5} Let $T=\alt(5)$ and let $T\le G\le W(T,2).$ Then  $G$ is not an IBIS group.
\end{prop}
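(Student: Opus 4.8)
The plan is to show that when $T=\alt(5)$, the group $G$ with $T^2\le G\le W(2,T)$ always admits an irredundant sequence that is strictly longer than $b(G)$, which by Remark~\ref{rem:ibisirredundantsize} forces $G$ to fail to be IBIS. Since $\alt(5)\cong\PSL(2,4)\cong\PSL(2,5)$ is one of the excluded cases of Theorem~\ref{b3}, the generator-based argument of Corollary~\ref{cor:generatoribelli} does not apply, so I would instead produce an explicit bad irredundant chain by hand, using the two invariants $C(t)$ and $I(t)$ from the Lemma preceding Theorem~\ref{b3}. Note that $\Aut(\alt(5))=\sym(5)$, so $\Out(T)$ has order $2$ and there are only a few cases to treat for the coset $yH$.

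First I would recall from Proposition~\ref{basediag}(2) that $b(G)\le 4$ (in fact, since $k=2$ and $P_G=\langle\sigma\rangle$ is primitive, $b(G)\in\{3,4\}$). So it suffices to exhibit an irredundant sequence $([1,1],[1,t_1],\dots,[1,t_r])$ of length $r\ge 4$ that is \emph{not} a base; completing it to an irredundant base then yields an irredundant base of size strictly greater than $b(G)$. By the Lemma preceding Theorem~\ref{b3}, irredundancy is governed by the requirement that at each step either $\bigcap C(t_i)$ or $\bigcap I(t_i)$ strictly decreases, while failing to be a base means $\bigcap C(t_i)\ne 1$ or $\bigcap I(t_i)\ne\emptyset$. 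The key structural fact I would exploit is that $\alt(5)$ has centralizers of several distinct orders: a $3$-cycle has centralizer of order $3$ (cyclic $C_3$), a $5$-cycle has centralizer $C_5$, and a double transposition has centralizer $C_2\times C_2$ of order $4$. Because these centralizers are not all trivially intersecting in the required way, I can build a chain where the $C(t_i)$ strictly descend over several steps before reaching the identity.

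Concretely, mirroring the structure of the $\alt(7)$ proof in Proposition~\ref{prop:A7}, I would split into two cases according to whether $H=\alt(5)$ or $H=\sym(5)$ (equivalently, according to whether the relevant coset $yH$ contains a suitable involution). In the case where the coset $yH$ contains an involution $\alpha$ fixing a chosen pair of elements, I would pick elements $t_1,t_2,t_3$ (for instance three double-transpositions sharing a common transposition, so their centralizers descend) all inverted by $\alpha$, giving $\alpha\in\bigcap I(t_i)\ne\emptyset$; I would arrange $C(t_1)>C(t_1)\cap C(t_2)>C(t_1)\cap C(t_2)\cap C(t_3)$ to guarantee irredundancy, producing a non-base irredundant sequence of length $4$. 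In the complementary case $H=\alt(5)=Hy$, I would instead use a triple of $3$-cycles or $5$-cycles $t_1,t_2,t_3$ whose centralizers strictly descend, together with a common inverting element lying in $yH=\alt(5)$ (note that in $\alt(5)$ an odd permutation cannot be used, so here I would check that a suitable even element inverts the chosen $t_i$, or else rely purely on the descending chain of the $C(t_i)$ with $\bigcap I(t_i)=\emptyset$ to still break the base condition via $\bigcap C(t_i)\ne 1$ — whichever obtains).

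The main obstacle I anticipate is the bookkeeping of the $I(t_i)$ sets in the case $H=\alt(5)$: unlike centralizers, the inverting sets $I(t)=\{x\in yH\mid x^{-1}tx=t^{-1}\}$ depend on the coset $yH$ and on whether the required inverting automorphism is inner (even) or outer (odd), so I must verify case-by-case that a single element simultaneously inverts my chosen $t_1,t_2,t_3$ and lies in the correct coset. The cleanest route is to choose the $t_i$ to be conjugate double-transpositions fixing a common point (say all fixing the point $5$), so that a transposition or a $3$-cycle permuting the remaining symbols serves as a uniform inverting element; I would then just check which coset that element lands in and, if necessary, swap to the purely centralizer-based descending chain. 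Once the explicit sequence is produced and verified to be irredundant and not a base, the conclusion follows immediately from Proposition~\ref{basediag} and Remark~\ref{rem:ibisirredundantsize} exactly as in Proposition~\ref{prop:A7}.
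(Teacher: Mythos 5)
There is a genuine gap. Your plan is to get by with the bound $b(G)\le 4$ alone, by always exhibiting a non-base irredundant sequence with four points and completing it to an irredundant base with at least five points. This is impossible in two of the three cases. When $H=\alt(5)$ (which happens both when $yH\ne H$ and when $yH=H=\alt(5)$), the group $\alt(5)\cong\PSL(2,4)$ is a CT-group by Lemma~\ref{lem:PICP}, so for nontrivial $t_1,t_2$ either $C(t_1)=C(t_2)$ or $C(t_1)\cap C(t_2)=1$; moreover $\bigcap_{i}I(t_i)$ is either empty or a coset of $\bigcap_i C(t_i)$, so the stabilizer order $\bigl|G_{([1,1],[1,t_1],\dots,[1,t_j])}\bigr|=\bigl|\bigcap_i C(t_i)\bigr|+\bigl|\bigcap_i I(t_i)\bigr|$ always lies in $\{2|C(t_1)|,\,|C(t_1)|,\,2,\,1\}$, and a short check of the admissible transitions shows the chain can drop strictly at most three times and cannot stop above the identity after three drops. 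Hence every irredundant base has at most four points, no five-point irredundant base exists, and $b(G)\le 4$ yields no contradiction. In these cases you must first prove $b(G)=3$ (the paper does this with explicit size-$3$ irredundant bases, e.g.\ $([1,1],[1,(1,2,3)],[1,(1,2,3,4,5)])$ when $yH\ne H$, and $([1,1],[1,(1,2,3)],[1,(1,2)(3,4)])$ when $yH=H=\alt(5)$) and then exhibit a three-point irredundant non-base sequence; your proposal never establishes $b(G)=3$ anywhere.

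Separately, your concrete witness fails even in the one case where the five-point strategy is viable, namely $yH=H=\sym(5)$: for three double transpositions through a common transposition, say $t_1=(1,2)(3,4)$, $t_2=(1,2)(3,5)$, $t_3=(1,2)(4,5)$, one computes $C(t_1)\cap C(t_2)=C(t_1)\cap C(t_2)\cap C(t_3)=\{1,(1,2)\}$ and likewise $I(t_1)\cap I(t_2)=I(t_1)\cap I(t_2)\cap I(t_3)$, so the fourth point is redundant. The paper instead mixes a $3$-cycle with two double transpositions, $t=(3,4,5)$, $s_1=(1,2)(3,4)$, $s_2=(1,2)(4,5)$, so that the $C$-chain drops at the second step, the $I$-chain drops at the third, and $(1,2)$ survives in the final intersection. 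Your fallback of relying purely on a descending chain of the $C(t_i)$ is also dead when $H=\alt(5)$, again because the CT property kills $C(t_1)\cap C(t_2)$ as soon as the centralizers differ.
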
 
\begin{proof}
We prove the result by considering different cases and, in each of them, we set $a:=(1,2,3).$

First, let $yH\ne H.$ 
Under this assumption we have that $H=T=\alt(5)$ and so, without loss of generality, we can assume that $y=(1,2).$
Let $a_2:=(1,2,3,4,5)$ and let $\BB_1:=([1,1], [1, a], [1, a_2]).$ Note that $a\in C(a)\setminus C(a_2),$ $ C(a)\cap C(a_2)=1,$ and $I(a)\cap I(a_2)=\emptyset.$ Hence $\BB_1$ is an irredundant base of size $3$.
Now, let $b_2:=(1,2,4)$. Note that $a\in C(a)\setminus C(b_2)$ and $(1,2)\in I(a)\cap I(b_2)$. Hence $([1,1], [1, a], [1, b_2])$ is an irredundant sequence that is not a base. Completing this to an irredundant bases of $G$, we produce an irredundant base of size at least $4$. By Remark~\ref{rem:ibisirredundantsize}, $G$ is not IBIS in this case.

Let $yH=H=T$, let $c_2:=(1,2)(3,4)$, and let $\SS_1:=([1,1], [1, a], [1, c_2 ])$.  Since 
$C(a)\cap C(c_2)=1$  and $I(a)\cap I(c_2)=\emptyset,$ then $\SS_1$ is an irredundant base of size $3$.
Let $d_2:=(1,2)(4,5).$ Then $a\in C(a) \setminus C(d_2)$ and $d_2\in I(a)\cap  I(d_2)$. Consequently $([1,1], [1, a], [1, d_2])$  is an irredudant sequence that is not a base and we can conclude as in the previous paragraph.

Finally, we need to consider the case $yH=H=\sym(5)$. Here, 
let $t:=(3,4,5)$, $s_1=(1,2)\,(3,4),$ and $s_2:=(1,2)\,(4,5).$ Since $t\in C(t)\setminus C(s_1),$ $s_1\in (I(t)\cap I(s_1))\setminus I(s_2),$ and
 $(1,2)\in C(t)\cap C(s_1) \cap C(s_2)$, then $([1,1], [1,t],[1,s_1], [1,s_2])$ is an irredundant sequence of $G$ that is not a base.
Thus  completing the irredundant sequences to a base of $G$, we find an irredundant base of $G$ with size at least $5$. By Proposition~\ref{basediag} $b(G)\le 4,$ hence the result follows from Remark~\ref{rem:ibisirredundantsize}.
\end{proof}

\begin{prop}\label{bPSL2q} Let $T=\PSL(2,q)$. If $q>5$ is odd and $T^2 \le G\le W(2,T)$, then $G$ is not an IBIS group.
\end{prop}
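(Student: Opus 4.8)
The plan is to show that for $T=\PSL(2,q)$ with $q>5$ odd, the group $G$ with $T^2\le G\le W(2,T)$ admits an irredundant sequence of the form $([1,1],[1,t_1],\dots,[1,t_r])$ that fails to be a base and strictly exceeds $b(G)$, thereby violating the IBIS condition via Remark~\ref{rem:ibisirredundantsize}. Since $q$ is odd, $T=\PSL(2,q)$ is \emph{not} a CT-group by Theorem~\ref{thm:CT}, so the centralizer structure is rich enough to build a long descending chain. The key distinction from Proposition~\ref{p1} is that here $b(G)$ might equal $4$ (when $P_G=\Sym(2)$ in the terminology of Proposition~\ref{basediag}), so producing a non-base irredundant sequence of length $3$ is not automatically enough; I must instead construct one of length at least $b(G)+1$, and handle the sets $I(t_i)$ coming from the outer element, not just the centralizers $C(t_i)$.

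First I would fix the standard elements of $\PSL(2,q)$: a generator $u$ of a cyclic torus (of order dividing $q-1$ or $q+1$, abelian and self-centralizing outside its own cyclic group), a unipotent element $z$, and a suitable involution $w$. Because $q$ is odd the centralizer of an involution is dihedral of order $q\pm1$, and distinct involutions generate large subgroups while their centralizers can intersect nontrivially; this is exactly the CT-failure that I would exploit. The idea is to choose $t_1,t_2$ with $C_T(t_1)\cap C_T(t_2)\ne 1$ but $C_T(t_1)\ne C_T(t_2)$ (so the centralizer chain drops slowly), while simultaneously arranging the involution data so that $\bigcap I(t_i)$ also descends or stays nonempty for as long as possible. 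Concretely, picking elements inside a common torus or sharing a common central involution lets me pad the sequence $[1,t_1],[1,t_2],[1,t_3]$ so that the chain $C(t_1)\supsetneq C(t_1)\cap C(t_2)\supsetneq C(t_1)\cap C(t_2)\cap C(t_3)$ is strict yet the final intersection is still nontrivial, forcing irredundancy without reaching a base.

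I would split the argument by whether the outer coset $yH$ equals $H$ (i.e.\ whether $\sigma\in G$) and, when it does not, by the parity/structure of the element $y$ realizing the top transposition, mirroring the case analysis in Propositions~\ref{prop:A7} and~\ref{prop:A5}. In each branch I would exhibit explicit $t_1,\dots,t_r$ and verify, using the criterion of the Lemma preceding Theorem~\ref{b3}, that the sequence is irredundant (each step strictly refines either $\bigcap C(t_i)$ or $\bigcap I(t_i)$) but that $\bigcap C(t_i)\ne 1$ or $\bigcap I(t_i)\ne\emptyset$, so it is not a base. Comparing the resulting length with the bound $b(G)\le 4$ from Proposition~\ref{basediag} then yields an irredundant base strictly longer than $b(G)$ after completion, and Remark~\ref{rem:ibisirredundantsize} gives the conclusion.

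The main obstacle I anticipate is the bookkeeping for the involution sets $I(t)=\{x\in yH\mid x^{-1}tx=t^{-1}\}$: unlike the centralizers, these are cosets/empty sets whose behavior depends delicately on whether the chosen $t_i$ are real via an element of the coset $yH$, and this interacts with the precise value of $b(G)$ (which is $3$ or $4$ depending on $G$). Ensuring that I can always force the total length past $b(G)$ \emph{simultaneously} in the $C$ and $I$ components—rather than accidentally producing a genuine base of length $3$—will require choosing the torus and involutions so that a common nontrivial element survives all intersections. I expect that selecting all $t_i$ to commute with a single fixed involution (so that this involution lies in every $C(t_i)$, keeping the centralizer intersection nontrivial) while varying them enough to make the chain strict will be the cleanest way to defeat both obstructions at once.
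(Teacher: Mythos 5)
Your high-level strategy --- exploit the failure of the CT-property for odd $q$, split on whether the relevant coset $yH$ equals $H$, build an irredundant non-base sequence of the form $([1,1],[1,t_1],\dots)$ tracking both the $C(t_i)$ and the $I(t_i)$, and compare with $b(G)\le 4$ --- is indeed the paper's strategy in outline, and you correctly identify the central difficulty (the interaction between the $I$-sets and the unknown value of $b(G)$). But the proposal stops exactly where the proof begins: no elements are actually produced, and the two devices the paper uses to resolve the difficulty you name are missing. First, the paper does \emph{not} construct a length-$4$ non-base sequence in every case. It chooses $\epsilon$ so that $a=(q+\epsilon)/2$ is odd, counts involutions to find two distinct dihedral subgroups $M_1,M_2\cong D_{2a}$ sharing an involution $x$, takes rotations $y_i\in M_i$ of order $a$ with $\langle y_1,y_2\rangle=T$, and observes that $x$ is the \emph{only} automorphism inverting both $y_1$ and $y_2$. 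When $x\notin yH$ this gives $C(y_1)\cap C(y_2)=1$ and $I(y_1)\cap I(y_2)=\emptyset$, hence $b(G)=3$, and then Proposition~\ref{p1} (a length-$3$ non-base irredundant sequence from the CT-failure) finishes. Only when $x\in yH$ does one need a length-$4$ sequence, and there the paper exhibits $([1,1],[1,x],[1,r],[1,y_1])$ with $r$ of order $b=(q-\epsilon)/2$ inverted by $x$, so that $x\in I(x)\cap I(r)\cap I(y_1)$ witnesses non-baseness while the $C$-chain supplies the strict drops.

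Your proposed uniform construction --- all $t_i$ lying in the centralizer of a fixed involution $w$ so that $w$ survives in every $C(t_i)$ --- does keep the sequence from being a base, but it does not by itself yield a strict chain of the required length: once $\bigcap_i C(t_i)$ stabilizes at $\langle w\rangle$ (which happens after two well-chosen $t_i$ in $\PSL(2,q)$, since centralizers of non-involutions are cyclic tori), every further step of irredundancy must come from a strict drop in $\bigcap_i I(t_i)$, and you give no argument that the coset $yH$ contains the inverting elements needed to make that happen --- in particular when $yH\ne H$ the relevant $I$-sets may already be empty. There is also an off-by-one in your reduction: a non-base irredundant sequence of length $b(G)$ (not $b(G)+1$) already suffices, since its completion to an irredundant base has length at least $b(G)+1$; this matters because it is precisely what lets the paper get away with a length-$3$ sequence in the case $b(G)=3$ rather than having to manufacture length-$4$ sequences everywhere.
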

\begin{proof}
The conjugacy classes of subgroups of $T=PSL(2,q)$ are well-known (see for instance~\cite{huppert}). Among them there are a conjugacy class of dihedral groups of order $q-1$ (in  Aschbacher class ${\mathcal C}_2$) and a conjugacy class of dihedral groups of order $q+1$
(in  Aschbacher class ${\mathcal C}_3$).  We can choose $\epsilon\in\{\pm 1\}$ such that $a=\frac{q+\epsilon}2$ is odd. Let $b=\frac{q-\epsilon}2.$  In $T$ there is just one conjugacy class of involutions and the centralizer of an involution is a dihedral group $D_{2b}$ of order $2b$. Note that $|T|=\frac 12q(q^2-1)=2qab.$ Thus the total number of involutions is $qa$. There are $qb$ subgroups isomorphic to a dihedral group $D_{2a}$ of order $2a$ and each of them contains $a$ involutions. Hence every involution is contained in $b$ different subgroups of type $D_{2a}$.
In particular, there exist two different  subgroups $M_1$ and $M_2,$ both isomorphic to $D_{2a}$, such that $M_1\cap M_2$ contains an involution $x$. 

First, assume $q > 11.$ In this case the dihedral groups of order $2a$ are maximal in $G$.
Let
$y_i\in M_i$ be an element of order $a$ such that $M_i=\langle y_i,x\rangle$ and $y_i^x=y_i^{-1}$, for $i=1,2.$ Since there is no maximal subgroup of $T$ containing both $y_1$ and $y_2$, we have that $T=\langle y_1,y_2\rangle.$ The previous argument does not work if $q\in \{7, 9, 11\}$ but it can be easily checked that, also in these cases, $T$ contains two elements $y_1, y_2$ of order $a$ and an involution $x$ such that $y_1^x=y_1^{-1}, y_2^x=y_2^{-1}$ and $\langle y_1, y_2\rangle=T.$

Note that if $z\in \Aut(T)$ is such that $y_i^z=y_i^{-1}$, for $i=1,2,$ then $zx^{-1} \in C_{\aut(T)}(\langle y_1, y_2\rangle)=1,$ hence $z=x.$

Assume that $x\not\in yH.$ Then we have 
that $I(y_1)\cap I(y_2)=\emptyset$ and $C(y_1)\cap C(y_2)=1$,  thus $\BB=([1,1], [1,y_1], [1,y_2])$ is an irredundant base for $G$ of size $3$. Hence, by Proposition \ref{p1}, $G$ is not IBIS.

Now, assume that  $x\in yH,$ so that $y\in Hx=H$. 
Note that there exists a subgroup of type $D_{2b}$ containing $x$ and $r$ such that $r$ has order $b$ and $r^x=r^{-1}.$ From $q\neq 5$, it follows $b>2$ and $r\neq r^{-1}.$ Moreover we have that $T=\langle x, y_1,r\rangle$, $x\in C(x)\setminus C(\langle x,r\rangle)$, $r^{\frac b2}\in C(\langle x,r\rangle )\setminus C(\langle x,r,y_1\rangle),$ and $x\in I(x)\cap I(r)\cap I(y_1).$ Hence $([1,1], [1,x], [1,r],[1,y_1])$ is an irredundant sequence that is not a base, so $G$ has an irredundant base of size at least 5. As usual, the result follows from Proposition~\ref{basediag} and Remark~\ref{rem:ibisirredundantsize}.
\end{proof}

%

\begin{lemma}\label{intersecodiedrali}
	Let $T=\PSL(2,2^f)$ with $f\geq 3.$
	Then there exists two maximal subgroups $K_1, K_2$ of $T$ such that $K_1\cong K_2 \cong D_{2(q-1)}$ and $|K_1\cap K_2|=1.$
\end{lemma}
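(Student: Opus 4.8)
The plan is to realise the subgroups isomorphic to $D_{2(q-1)}$, where $q=2^f$, as the normalizers of the \emph{split tori} of $T=\PSL(2,q)$ and then to produce two of them with trivial intersection by a counting argument. Recall from the list of subgroups used in the proof of Lemma~\ref{lem:PICP} that, for $f\ge 3$, a subgroup of $T$ isomorphic to $D_{2(q-1)}$ is maximal and arises as $N_T(C)=C\rtimes\langle t\rangle$, where $C\cong C_{q-1}$ is a cyclic (split) torus and $t$ is an involution inverting $C$. Since $q-1$ is odd, $C$ contains no involution, so every involution of $N_T(C)$ lies in the non-trivial coset and $N_T(C)$ has exactly $q-1$ involutions. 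Because two distinct tori $C\ne C'$ have distinct normalizers (for $n$ odd the cyclic subgroup is the unique index-$2$ subgroup of $D_{2n}$), the number of conjugates of a fixed $K_1=N_T(C_1)$ equals the number of split tori, namely $|T|/|N_T(C_1)|=\tfrac{q(q+1)}{2}$. I would fix $K_1$ and bound the number of its conjugates that meet it non-trivially.

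The argument rests on two counts. First, I claim that a non-trivial element $h\in C_1$ lies in no conjugate of $K_1$ other than $K_1$ itself: such an $h$ has odd order $>1$, hence is a regular semisimple element of $T$ whose centralizer is exactly the split torus $C_1=C_T(h)$; if $h$ lay in some $N_T(C')$ it could not be one of the inverting involutions, so $h\in C'$, forcing $C'=C_T(h)=C_1$ and $N_T(C')=K_1$. Second, I claim that each involution of $T$ lies in exactly $q/2$ conjugates of $K_1$: counting incidences $(t,C)$ with $t$ an involution inverting the split torus $C$ gives $\tfrac{q(q+1)}{2}\cdot(q-1)=|T|/2$ pairs, and since $T$ has a single conjugacy class of $q^2-1$ involutions, each involution occurs in $\tfrac{|T|/2}{q^2-1}=q/2$ of them.

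Combining these, any conjugate $K\ne K_1$ with $K\cap K_1\ne 1$ must share an \emph{involution} with $K_1$ (the non-trivial elements of $C_1$ being excluded by the first count). Each of the $q-1$ involutions of $K_1$ lies in $q/2-1$ conjugates besides $K_1$, so a union bound yields at most $(q-1)(q/2-1)=\tfrac{(q-1)(q-2)}{2}$ such conjugates. This is strictly smaller than the number $\tfrac{q(q+1)}{2}-1$ of all conjugates different from $K_1$, their difference being $2(q-1)>0$. Hence at least $2(q-1)$ conjugates $K_2$ of $K_1$ satisfy $K_1\cap K_2=1$, and any such $K_2$ together with $K_1$ gives the required pair of maximal subgroups. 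The main work is establishing the two exact counts — in particular the centralizer computation that confines non-trivial intersections to shared involutions, and the transitivity-based double count for involutions — together with citing maximality of $D_{2(q-1)}$ from the subgroup classification; the concluding arithmetic is then immediate.
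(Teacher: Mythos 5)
Your proposal is correct and follows essentially the same counting argument as the paper: enumerate the $q(q+1)/2$ conjugate subgroups isomorphic to $D_{2(q-1)}$, double count to show each involution lies in $q/2$ of them, and conclude by the union bound $(q-1)(q/2-1)<q(q+1)/2-1$. Your additional justification that non-trivial intersections can only occur through shared involutions (via the centralizer of a regular semisimple torus element) fills in a step the paper states without proof, but the overall route is identical.
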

\begin{proof}
Let $q=2^f.$ In $T$ there is just one conjugacy class of involutions and the centralizer of an involution is a Sylow 2-subgroup of order $q$. Note that $|T|=q(q^2-1).$ Thus the total number of involutions is $q^2-1$. There are $q(q+1)/2$ maximal subgroups isomorphic to  $D_{2(q-1)}$ (all conjugated in $T$). Denote by $\Omega$ the set of these sugroups. Any element of $\Omega$  contains $q-1$ involutions. Hence every involution is contained in $q/2$ different subgroups of $\Omega.$ Now, fix $D\in \Omega.$ If $K\in \Omega$ and $K\neq D,$ then $|D\cap K|\leq 2.$ Let
$\Omega_D=\{K \in \Omega \mid |D \cap K|=2\}.$ For any involution $z \in D$, there are exactly $q/2-1$ subgroups $K\in \Omega_D$ with $D\cap K=\langle z\rangle,$ hence
$|\Omega_D|=(q-1)(q/2-1)<|\Omega|-1=q(q+1)/2-1$. Thus there exists $K\neq D$ with $K \in \Omega \setminus \Omega_D.$
\end{proof}

\begin{lemma}\label{lem:bPSL} Assume that $T^2 \le G\le W(2,T)$, with $T=\PSL(2,2^f)$ and $f\ge 3$.
If either $yH\ne H$ or $yH=H=T$, then $b(G)=3$.\end{lemma}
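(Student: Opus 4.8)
The plan is to produce an irredundant base of size $3$; this suffices because $b(G)\ge 3$ by Proposition~\ref{basediag}. By the base criterion in the lemma preceding Theorem~\ref{b3}, I must exhibit nontrivial $t_1,t_2\in T$ with $C(t_1)\cap C(t_2)=1$ and $I(t_1)\cap I(t_2)=\emptyset$. The guiding observation is that $I(t)\subseteq yH$ while $\inn(T)\le H$: when $yH=H=\inn(T)$ the sets $I(t)$ consist only of inner automorphisms, whereas when $yH\neq H$ they contain no inner automorphism at all. These opposite features force two different choices of $t_1,t_2$. I would also record one auxiliary fact: no involutory field automorphism of $T$ inverts an element of order $q-1$ or $q+1$. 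Indeed such an element generates a maximal torus $\langle t\rangle$ of order $q\mp1$, a field involution inverting $t$ would normalise $\langle t\rangle$ and act on it as $x\mapsto x^{\pm 2^{f/2}}$, and for $f\ge 3$ neither $2^{f/2}$ nor $-2^{f/2}$ is congruent to $-1$ modulo $q\mp1$. Since every involution of $\Aut(T)=\inn(T)\rtimes\langle\varphi\rangle$ is inner or a field involution, any involution inverting an element of order $q-1$ or $q+1$ is inner.

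For the case $yH=H=\inn(T)$ I would take $t_1,t_2$ to be generators of the cyclic groups of order $q-1$ in two maximal subgroups $K_1\cong K_2\cong D_{2(q-1)}$ with $K_1\cap K_2=1$, as furnished by Lemma~\ref{intersecodiedrali}. Here $H=\inn(T)$, so $C(t_i)=C_T(t_i)=\langle t_i\rangle$ and hence $C(t_1)\cap C(t_2)\le K_1\cap K_2=1$; moreover any element of $I(t_1)\cap I(t_2)\subseteq\inn(T)$ is an inner element inverting both $t_1$ and $t_2$, so it lies in $N_T(\langle t_1\rangle)\cap N_T(\langle t_2\rangle)=K_1\cap K_2=1$ and therefore cannot exist. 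Thus $([1,1],[1,t_1],[1,t_2])$ is a base of size $3$.

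For the case $yH\neq H$ I would instead take $t_1$ of order $q-1$ and $t_2$ of order $q+1$. By Dickson's classification (as used in Lemma~\ref{lem:PICP}) the only maximal subgroups containing $\langle t_2\rangle\cong C_{q+1}$ are the conjugates of $N_T(\langle t_2\rangle)\cong D_{2(q+1)}$, and none of these contains an element of order $q-1>2$; hence $\langle t_1,t_2\rangle=T$ and $C(t_1)\cap C(t_2)\le C_{\Aut(T)}(\langle t_1,t_2\rangle)=C_{\Aut(T)}(\inn(T))=1$. Because $t_1,t_2$ generate $T$, at most one automorphism $\beta_0$ inverts both of them, and $\beta_0^{2}$ centralises $t_1,t_2$, so $\beta_0$ is an involution; as it inverts $t_1$ (of order $q-1$), the auxiliary fact forces $\beta_0\in\inn(T)\le H$, whence $\beta_0\notin yH$. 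Therefore $I(t_1)\cap I(t_2)\subseteq\{\beta_0\}\cap yH=\emptyset$, giving once more a base of size $3$.

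The hard part is exactly the second case $yH\neq H$, where $H$ may strictly contain $\inn(T)$ (so centralizers could a priori carry field automorphisms) and $yH$ may contain field involutions. Both dangers evaporate through the two devices above: using an element of order $q+1$ forces $\langle t_1,t_2\rangle=T$ and collapses the centralizer intersection to $C_{\Aut(T)}(\inn(T))=1$, while the torus computation shows that no field involution inverts a regular element, so the unique possible common inverter is inner and hence banished from $I(t_i)\subseteq yH$. The only steps needing genuine care are the determination of the maximal overgroups of $C_{q\pm1}$ and the congruence computation in the auxiliary fact.
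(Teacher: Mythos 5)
Your proposal is correct, and it coincides with the paper's proof in the case $yH=H=T$: both take two elements of order $q-1$ lying in the two disjoint maximal dihedral subgroups supplied by Lemma~\ref{intersecodiedrali}, so that generation kills the centralizer intersection and the normalizer argument kills the common inverters. In the case $yH\ne H$ the two arguments genuinely diverge. The paper chooses two \emph{involutions} $w,z$ generating a dihedral group of order $2(q-1)$ and computes directly, via the action of the Frobenius map on the split torus, that $C_{\Aut(T)}(\langle w,z\rangle)=1$; the payoff of using involutions is that inverting them is the same as centralizing them, so any element of $I(w)\cap I(z)$ would be trivial and hence in $H$ rather than $yH$ -- the $I$-condition comes for free. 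You instead take $t_1,t_2$ of orders $q-1$ and $q+1$: the centralizer condition then comes for free from $\langle t_1,t_2\rangle=T$, but you must pay for the $I$-condition with the auxiliary fact that no involution in $\Aut(T)\setminus\inn(T)$ inverts a generator of a maximal torus. That fact is true and your congruence computation is the right one, but as written it is slightly informal: you should say explicitly that an involution outside $\inn(T)$ lies in the coset $\varphi^{f/2}\inn(T)$ (forcing $f$ even), and that after conjugating $\langle t\rangle$ to the standard torus such an involution normalizing $\langle t\rangle$ is $\varphi^{f/2}$ times an element of $N_T(\langle t\rangle)$, whence its image in $\Aut(C_{q\mp1})$ is $\pm 2^{f/2}$, neither of which is $-1\bmod (q\mp 1)$ for $f\ge 3$. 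With that detail spelled out, your route is a complete and valid alternative; the paper's involution trick is marginally more economical because it avoids any discussion of outer involutions.
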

\begin{proof}  
First, we assume that $yH\ne H$. Since $q=2^f$, we have that $\PSL(2,q)$ is isomorphic to ${\mathrm{SL}}(2,q)$,  so we may deal with $T={\mathrm{SL}}(2,q).$
 Let $a$ be an element of multiplicative order $q-1$ in the field $\BBF_q$. 
Consider the following elements of $T:$
$$x:=\begin{pmatrix} a&0\\0&a^{-1}\end{pmatrix},\;w:=\begin{pmatrix} 0&1\\1&0\end{pmatrix},\;\mbox{and}\,\;z:=wx=\begin{pmatrix} 0&a^{-1}\\a&0\end{pmatrix}. $$
Then $N:=\langle w,z\rangle$ is a dihedral group of order $2(q-1)$. As $C_T(x)=\langle x\rangle$, it follows that $C_T(N)=1$.

 We are going to show that $C_{\Aut(S)}(N)=1.$ Denote  by $\varphi\in \Aut(T)$ the automorphism induced by the Frobenius automorphism of the field $\BBF_q$. 
Let $s\in T$, let $1\le i\le f,$ and let assume that $\varphi^is$ centralizes $N$. 
%
In particular, $\varphi^is$ centralizes $x$, and so $x^{\varphi^i}=x^{s^{-1}}$ and $x$ are conjugated. Therefore $\{a, a^{-1}\}=\{a^{2^i}, a^{-2^i}\}$.

If $a=a^{-2^i}$, then $a^{2^i+1}=1$ and so $q-1=2^f-1$ divides $2^i+1$, contradicting the assumption $f\ge 3$. 
Hence $a=a^{2^i}$, that is $\varphi^i$ is the identity. Consequently $s\in C_T(N)=1.$
This proves that $C_{\Aut(S)}(N)=1$ and in particular that $C(z)\cap C(w)=1$.

If $I(w)\cap I(z)\ne\emptyset$ , then there exist $i\in \mathbb{N}$ and $s\in T$ such that $\varphi^is\in I(w)\cap I(z)$.  Since $w$ and $z$ are involutions, then $\varphi^is\in C_{\Aut(S)}(N)=1$. Consequently $1\in yH$, that contradicts $yH\ne H$.

Hence $I(w)\cap I(z)=\emptyset$ and then $([1,1], [1,z], [1,w])$ is a base for $G$. Combining this and Proposition~\ref{basediag}, we deduce that $b(G)=3$.

\medskip

Now, assume that $yH=H=T.$ By Lemma \ref{intersecodiedrali}, $G$ contains two subgroups $H_1$ and $H_2$ isomorphic to a dihedral group of order $2(q-1),$ with $H_1\cap H_2=1.$ Take $x_1 \in H_1, x_2 \in H_2,$ both of order $q-1.$ There is no maximal subgroup containing both $x_1$ and $x_2$, so $T=\langle x_1,x_2 \rangle.$ In particular, $C(x_1)\cap C(x_2)=1$. Moreover, if $z$ is an involution inverting $x_1,$ then $z\in H_1$. Hence, there are no involutions in $T$ inverting both $x_1$ and $x_2.$
Therefore $([1,1], [1,x_1], [1,x_2])$ is an irredundant base of size 3. Combining this and Proposition~\ref{basediag}, we deduce that $b(G)=3$.
\end{proof}


\begin{prop}\label{PSL}Let $T=\PSL(2,2^f)$, with $f\ge 3.$ If $T^2 \le G\le W(2,T)$ then $G$ is not an IBIS group.\end{prop}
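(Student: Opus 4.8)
The plan is to finish the proof by treating the three configurations left open after Lemma~\ref{lem:bPSL}: case (A) $yH\neq H$; case (B) $yH=H=T$; and case (C) $yH=H$ with $H\supsetneq T$. In cases (A) and (B) Lemma~\ref{lem:bPSL} already yields $b(G)=3$, so by Remark~\ref{rem:ibisirredundantsize} it is enough to exhibit a single irredundant sequence of the form $([1,1],[1,t_1],[1,t_2])$ that is \emph{not} a base: completing it produces an irredundant base of size at least $4>3=b(G)$, so $G$ is not IBIS. In case (C) the value of $b(G)$ is not supplied by Lemma~\ref{lem:bPSL}, so there I would instead display \emph{two} irredundant bases of different sizes, which by definition contradicts the IBIS property without any need to identify $b(G)$. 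Throughout I set $q=2^f$ and identify $T=\PSL(2,2^f)$ with $\mathrm{SL}(2,q)$, writing $\varphi$ for the Frobenius automorphism and reusing the elements $x$ (of order $q-1$, with $C_T(x)=\langle x\rangle$) and the involution $w$ (inverting $x$) of Lemma~\ref{lem:bPSL}.

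For the non-base sequences I would exploit that a nonidentity unipotent element has centraliser equal to the whole Sylow $2$-subgroup $U$ in which it lies, and that for an involution $t$ one has $I(t)=\{g\in yH\mid g\text{ centralises }t\}$. In case (A) the element $\varphi^{m}$ (where $y\equiv\varphi^{m}\bmod T$, and $f\nmid m$ since $y\notin T$) lies in $yH$; as $\varphi$ acts on $U$ by squaring the nontrivial entry, $\varphi^{m}$ fixes exactly the unipotent elements with entry in $\BBF_{2^{\gcd(m,f)}}\subsetneq\BBF_{q}$. Taking two involutions $t_1,t_2\in U$ with entries $1$ and $b\notin\BBF_{2^{\gcd(m,f)}}$ gives $C(t_1)\cap C(t_2)\supseteq U\neq1$ (not a base) while $\varphi^{m}\in I(t_1)\setminus I(t_2)$ (irredundant at the first step). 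In cases (B) and (C), where $w\in T\subseteq yH$, I would instead use $([1,1],[1,x],[1,w])$: here $w$ inverts $x$ and centralises itself, so $w\in I(x)\cap I(w)\neq\emptyset$ and the sequence is not a base, whereas $x\in C(x)\setminus C(w)$ makes it irredundant. This already finishes cases (A) and (B), and provides the required size-$\geq 4$ irredundant base in case (C).

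It remains, in case (C), to produce an irredundant base of size $3$. This is the only place where $f\ge 3$ is genuinely needed: by Lemma~\ref{intersecodiedrali} I would choose two dihedral subgroups $K_1,K_2\cong D_{2(q-1)}$ with $K_1\cap K_2=1$ and let $x_1,x_2$ generate their cyclic parts of order $q-1$. Since $K_1,K_2$ lie in no common maximal subgroup we get $\langle x_1,x_2\rangle=T$, whence $C(x_1)\cap C(x_2)=C_H(T)=1$ (as $C_{\Aut(T)}(T)=1$). Thus $([1,1],[1,x_1],[1,x_2])$ is a base precisely when $I(x_1)\cap I(x_2)=\emptyset$, that is, when no element of $H$ inverts both $x_1$ and $x_2$. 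Combined with the size-$\geq 4$ base of the previous paragraph, this gives two irredundant bases of distinct sizes and so settles case (C).

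The hard part will be exactly this last point, the emptiness of $I(x_1)\cap I(x_2)$ in case (C). Any common inverting element $g\in H$ satisfies $g^{2}\in C_H(T)=1$, so $g$ is an involution of $H\le\PGamL(2,2^f)$; the \emph{inner} involutions inverting $x_1$ (respectively $x_2$) lie in $K_1$ (respectively $K_2$) and are excluded by $K_1\cap K_2=1$, but ruling out the field (and, when $f$ is even, graph-field) involutions of $H\setminus T$ requires a real analysis of the involution classes of $\PGamL(2,2^f)$. I would resolve this by recalling that the centraliser of a field involution is a subfield-type subgroup of $\PSL(2,2^{f/2})$ shape and checking that being inverted by such an element forces $x_1$ and $x_2$ into a common proper subfield subgroup, contradicting $\langle x_1,x_2\rangle=T$; alternatively one can strengthen the counting in Lemma~\ref{intersecodiedrali} so that the two tori can be chosen $\varphi^{d}$-stable and argue on the action of $H/T$ directly.
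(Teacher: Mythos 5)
Your case division and overall strategy are sound, and your treatments of cases (A) and (B) are complete and correct: when $yH\ne H$ the unipotent pair $t_1,t_2$ with $U\le C(t_1)\cap C(t_2)$ and $\varphi^{m}\in I(t_1)\setminus I(t_2)$ does exactly what is needed, and when $yH=H=T$ the sequence $([1,1],[1,x],[1,w])$ with $w\in I(x)\cap I(w)$ and $x\in C(x)\setminus C(w)$ is an irredundant non-base sequence; combined with $b(G)=3$ from Lemma~\ref{lem:bPSL} and Remark~\ref{rem:ibisirredundantsize}, both cases close. These witnesses differ from the paper's (which in case (A) uses an element of order $2^t+1$ inverted by $\varphi^{t}$, and in the case $yH=H=T$ exhibits an irredundant base of size $4$ directly from two involutions generating $D_{2(q+1)}$), but the mechanism is the same.

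The genuine gap is in case (C), at the step $I(x_1)\cap I(x_2)=\emptyset$, which you correctly flag as the hard point but do not resolve correctly. The mechanism you propose --- that being inverted by a field involution forces $x_1,x_2$ into a common proper subfield subgroup --- is not right: the set of elements of $T$ inverted by $\varphi^{f/2}$ is not contained in a subgroup, so no contradiction with $\langle x_1,x_2\rangle=T$ arises that way. What actually closes your argument is that no involution in the coset $T\varphi^{f/2}$ inverts \emph{any} element of order $q-1$: if $g=\varphi^{f/2}s$ and $x^g=x^{-1}$, then $x^{\varphi^{f/2}}$ is $T$-conjugate to $x^{-1}$, so an eigenvalue $a$ of $x$ satisfies $\{a^{2^{f/2}},a^{-2^{f/2}}\}=\{a,a^{-1}\}$, forcing the order of $a$ to divide $2^{f/2}+1$ or $2^{f/2}-1$, both smaller than $q-1$. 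With that (and $K_1\cap K_2=1$ to kill inner inverting involutions) your two-bases-of-different-sizes argument works. You should be aware, however, that the paper avoids this analysis entirely: in case (C) there is some $1\ne\varphi^{i}\in H$, and the three matrices $x,w,z$ with entries in $\BBF_{2}$ are all centralized by $\varphi^{i}$, so $([1,1],[1,x],[1,w],[1,z])$ is already an irredundant sequence of length $4$ that is not a base; extending it gives an irredundant base of size at least $5$, while $b(G)\le 4$ by Proposition~\ref{basediag}. That route needs no base of size $3$ and no control of $I(x_1)\cap I(x_2)$.
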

\begin{proof} As above, we work in $T={\mathrm{SL}}(2,2^f)$.
First, assume that $yH\ne H$. Without loss of generality, denoting again by $\varphi$ the automorphism induced by the Frobenius automorphism of the field $\BBF_{2^f}$, we can say that $y=\varphi^{t},$ for some $t\in \mathbb{N}.$
Since $y^2\in H$, then $f=2r$ is even. Moreover, we can assume that $t$ divides $r.$
Since $2^{t}+1$ divides $2^{f}-1,$
there exists a nonzero element $c\in\BBF_{2^f}$ of multiplicative order $2^t+1$.
 Consider the following elements
$$x:=\begin{pmatrix} c&0\\0&c^{-1}\end{pmatrix}\; \mbox{and}\;w:=\begin{pmatrix} 0&1\\1&0\end{pmatrix}.$$
Note that $x\in C(x)\setminus C(\langle x,w\rangle )$. Further, $x^{y}=x^{\varphi^{t}}=x^{-1}$ and
$w^{y}=w^{\varphi^{t}}=w=w^{-1}$, so that $y\in I(x)\cap I(w)\ne\emptyset$. Hence $([1,1], [1,x], [1,w])$ is an irredundant sequence that is not a base. From Lemma~\ref{lem:bPSL}, we have that $b(G)=3$, hence the result follows from Remark~\ref{rem:ibisirredundantsize}.

\medskip

Therefore we can assume that $yH=H$, so that $\sigma\in G$.

Let assume that there exists $1\neq \varphi^i \in \Aut(T)$ such that $(\varphi^i,\varphi^i)\in G,$ that is $H>T.$
Consider the following elements of $T:$
$$x:=\begin{pmatrix} 1&1\\1&0\end{pmatrix},\ \;w:=\begin{pmatrix} 0&1\\1&0\end{pmatrix}\;\mbox{and} \, \;z:=\begin{pmatrix} 1&1\\0&1\end{pmatrix}.$$
Note that $N:=\langle x, w \rangle$ is a dihedral group of order $2|x|$ and $w$ inverts $x$. In particular $x\in C(x)\setminus  C(w),$ $w \in I(x)\cap I(w),$ and $w\notin I(z)$.
Moreover, it is clear that $x^{\varphi^{i}}=x$, $w^{\varphi^{i}}=w,$ and $z^{\varphi^{i}}=z$, that is $\varphi^i\in C(x)\cap  C(w)\cap C(z)$. Hence $([1,1],[1,x],[1,w],[1,z])$ is an irredundant sequence that is not a base. This produces a base of $G$ with size at least $5$. Since $b(G)\leq 4$ by Proposition \ref{basediag}, the result follows from Remark~\ref{rem:ibisirredundantsize}.

Finally, assume $Hy=H=T.$ Notice that $T$ contains two involutions $z_1, z_2$ generating a dihedral group of order $2(2^f+1).$ Since $C(z_1)\cap C(z_2)=I(z_1)\cap I(z_2)=1,$ 
for every $x \in T$ with $|x|>2,$ $([1,1], [1,z_1], [1,z_2], [1,x])$ is an irredundant base of size 4. Now, the result follows combining~\ref{lem:bPSL} and Remark~\ref{rem:ibisirredundantsize}.
%
%
%
\end{proof}

\begin{prop}\label{PSL3q} If $T=\PSL(3,q)$ and $T^2 \le G\le W(2,T)$, then $G$ is not an IBIS group.\end{prop}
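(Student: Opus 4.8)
The strategy follows the pattern used for the other exceptional socles: for every $G$ with $T^2\le G\le W(2,T)$ I will produce an irredundant sequence $([1,1],[1,s_1],[1,s_2],[1,s_3])$ that is not a base. Completing it to an irredundant base yields a base of size at least $5$, while $b(G)\le 4$ by Proposition~\ref{basediag}; so $G$ has irredundant bases of different sizes and is not IBIS by Remark~\ref{rem:ibisirredundantsize}. As a first reduction, since $\PSL(3,q)\neq\PSL(2,2^f)$, Theorem~\ref{thm:CT} shows that $T$ is not a CT-group, so Proposition~\ref{p1} already disposes of every $G$ with $b(G)\le 3$. The remaining difficulty, and the reason $\PSL(3,q)$ is excluded from Theorem~\ref{b3}, is the graph automorphism $\gamma\colon A\mapsto (A^{\mathsf T})^{-1}$, which inverts every symmetric matrix and in particular every diagonal element of $T$; this makes the inverting sets $I(\,\cdot\,)$ abundant and is exactly what permits $b(G)=4$.

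I would split according to the coset $yH$, using $\Out(\PSL(3,q))\cong C_{d}\rtimes(C_f\times C_2)$, where $d=\gcd(3,q-1)$, $q=p^f$, and the factor $C_2$ is generated by the class of $\gamma$. If no element of $yH$ induces a graph automorphism, i.e.\ every element of $yH$ lies in $\PGamL(3,q)$, then conjugation and field automorphisms preserve the eigenvalue multiset of a semisimple element up to a Frobenius twist; hence a regular semisimple $s\in T$ whose eigenvalues are not sent to their inverses by any power of the field automorphism satisfies $I(s)=\emptyset$. Choosing $t$ with $\langle s,t\rangle=T$ forces $C(s)\cap C(t)\le C_{\Aut(T)}(T)=1$, so $([1,1],[1,s],[1,t])$ is a base and $b(G)=3$, and Proposition~\ref{p1} applies. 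If instead $yH$ contains an element $\alpha$ inducing the graph automorphism, then the set of elements it inverts is a conjugate of the symmetric matrices, and after replacing $G$ by a conjugate I may take $\alpha=\gamma$ and use symmetric elements. I then choose $s_1=\mathrm{diag}(a,a,a^{-2})$ and $s_2=\mathrm{diag}(b,b^{-2},b)$ with $a^3\neq 1\neq b^3$, whose centralizers are two distinct block subgroups meeting in the diagonal torus, and a symmetric $s_3$ not centralised by that torus. Then
$$C(s_1)\supsetneq C(s_1)\cap C(s_2)\supsetneq C(s_1)\cap C(s_2)\cap C(s_3),$$
so the sequence is irredundant by the criterion expressing irredundancy through the sets $C(t_i)$ and $I(t_i)$, while $\gamma\in I(s_1)\cap I(s_2)\cap I(s_3)\neq\emptyset$ shows it is not a base.

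The main obstacle is the bookkeeping in the second case. I must guarantee that the common inverter $\alpha$ lies in the prescribed coset $yH$ rather than merely in $\Aut(T)$: this requires tracking how the field and diagonal automorphisms twist $\gamma$ inside $yH$, and restricting the entries of the $s_i$ to the relevant Frobenius-fixed subfield (which is harmless as long as that subfield has more than two elements). I must also check that the strict inclusions persist when centralizers are taken in $H$ rather than in $T$, which follows from $T\le H$ together with $C_{\Aut(T)}(T)=1$. Finally, the degenerate cases $q\in\{2,4\}$ — where $\mathbb{F}_q^{\times}$ contains no element with $a^3\neq 1$ so the diagonal construction collapses, and where $\PSL(3,2)\cong\PSL(2,7)$ and $\PSL(3,4)$ has exceptional outer and multiplier structure — would be settled by a direct search for the required irredundant non-base, for instance in {\sc MAGMA}.
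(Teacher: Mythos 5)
Your overall strategy (exhibit a length-$4$ irredundant sequence that is not a base, then compare with $b(G)\le 4$ from Proposition~\ref{basediag}) is the right one, and your opening reduction via Proposition~\ref{p1} is correct. But as written the argument has genuine unfilled gaps. First, the cases $q\in\{2,4\}$ are simply deferred to a machine computation, so the proof is incomplete there. Second, the same collapse occurs more widely than you acknowledge: when the element of $yH$ inducing a graph automorphism is a graph--field automorphism $\gamma\varphi^{j}$, the diagonal entries of your $s_1,s_2$ must lie in the fixed field $\BBF_{p^{\gcd(j,f)}}$, and your construction needs an element $a$ of that subfield with $a^3\neq 1$; this fails whenever the fixed field is $\BBF_2$ or $\BBF_4$, which happens for arbitrarily large $q$ (e.g.\ $\PSL(3,16)$ with $y=\gamma\varphi^{2}$). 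Your caveat ``more than two elements'' is not the right condition. Third, the claim that after conjugation one may take the inverting element to be exactly $\gamma$, and that it then lies in the prescribed coset $yH$, is flagged as bookkeeping but never carried out, and it is precisely where graph--field automorphisms refuse to be normalised away. Finally, the first case (no graph automorphism in $yH$) rests on an asserted but unproved existence of a regular semisimple element with $I(s)=\emptyset$; this can be made to work with a Singer element, but it is not done.

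All of this machinery is avoidable. The paper's proof is uniform in $q$ and in the coset $yH$, and never touches the sets $I(\cdot)$: inside $\ssl(3,q)$ take the unipotent group $N$ of upper unitriangular matrices with entries in the prime field $\BBF_p$, a non-abelian group of order $p^3$ meeting the centre trivially, with $Z(N)=\langle c\rangle$ and non-commuting elements $a,b$. Then $a\in C(c)\setminus\bigl(C(c)\cap C(b)\bigr)$, $b\in \bigl(C(c)\cap C(b)\bigr)\setminus\bigl(C(c)\cap C(b)\cap C(a)\bigr)$, and $1\neq c\in C(c)\cap C(b)\cap C(a)$, so $([1,1],[1,c],[1,b],[1,a])$ is irredundant (the centralizer chain alone strictly decreases at every step) and is not a base (its pointwise stabilizer contains $(c,c)$). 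Completing it gives an irredundant base of size at least $5$, contradicting $b(G)\le 4$ via Remark~\ref{rem:ibisirredundantsize}. You would do well to look for such a ``nilpotent witness'' before launching into a case division over $\Out(T)$.
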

\begin{proof} Let $q=p^f$, let
$$N:=\left\{ \begin{pmatrix} 1&x&y\\0&1&z\\0&0&1\end{pmatrix}\big| x,y,z\in \BBF_p \right\}\subseteq {\mathrm{SL}}(3,q),$$ and consider the following elements of $N:$
$$a:=\begin{pmatrix} 1&1&0\\0&1&0\\0&0&1\end{pmatrix},\, b:=\begin{pmatrix} 1&0&0\\0&1&1\\0&0&1\end{pmatrix},\,c:=\begin{pmatrix} 1&0&1\\0&1&0\\0&0&1\end{pmatrix}.$$
Since $N$ intersects the center of ${\mathrm{SL}}(3,q)$  trivially, we may identify $N$ with a subgroup of $T.$ Note that $N$ is non-abelian of order $p^3$,
$Z(N)=\langle c \rangle$, and 
$N$ is isomorphic to a dihedral group of order 8 when $p=2$, and it has exponent $p$ when $p$ is odd. In particular
 $a\in C_T(c) \setminus ( C_T(c)\cap  C_T(b))$,  $b\in (C_T(c)\cap  C_T(b)) \setminus ( C_T(c)\cap  C_T(b) \cap C_T(a))$ and  $c\in ( C_T(c)\cap  C_T(b) \cap C_T(a))$.
Thus $([1,1], [1,c], [1,b], [1,a])$ is an irredundant sequence that is not a base. By Proposition \ref{basediag} we have that $b(G)\le 4$, hence by Remark~\ref{rem:ibisirredundantsize} $G$ is not IBIS.
\end{proof}

\begin{prop}\label{PSU3q} Assume that $S^2 \le G\le W(2,S)$, with $T=\PSU(3,q)$. Then $G$ is not an IBIS group.\end{prop}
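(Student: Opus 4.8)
The plan is to mimic the strategy used for $\PSL(3,q)$ in Proposition~\ref{PSL3q}, namely to produce inside $T=\PSU(3,q)$ a non-abelian $p$-subgroup $N$ (where $q=p^f$) whose centralizer lattice yields a strictly decreasing chain of length three, thereby giving an irredundant sequence of length $4$ that fails to be a base. First I would take $N$ to be the Sylow $p$-subgroup of $\SU(3,q)$, which is a group of order $q^3$ with centre $Z(N)$ of order $q$; concretely one realises $\SU(3,q)$ with respect to a Hermitian form and writes $N$ as the group of upper unitriangular matrices preserving that form. As in the $\PSL(3,q)$ case, $N$ meets the centre of $\SU(3,q)$ trivially, so I may regard $N$ as a subgroup of $T$.

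The key step is to exhibit three elements $a,b,c\in N$ with $c\in Z(N)$, $b$ not central, and $a$ chosen so that
\[
C_T(c)\;\supsetneq\;C_T(c)\cap C_T(b)\;\supsetneq\;C_T(c)\cap C_T(b)\cap C_T(a)\, ,
\]
witnessed by $a\in C_T(c)\setminus\bigl(C_T(c)\cap C_T(b)\bigr)$, by $b\in \bigl(C_T(c)\cap C_T(b)\bigr)\setminus\bigl(C_T(c)\cap C_T(b)\cap C_T(a)\bigr)$, and by $c\in C_T(c)\cap C_T(b)\cap C_T(a)$. Since $N$ is non-abelian these strict inclusions hold once $a$ and $b$ are taken to be non-commuting non-central elements of $N$ lying in distinct centralizers; the central element $c$ guarantees the innermost intersection is nontrivial, so that $([1,1],[1,c],[1,b],[1,a])$ is an irredundant sequence which is \emph{not} a base. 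As in the previous propositions, one then completes this sequence to an irredundant base, producing an irredundant base of size at least $4$, and invokes Proposition~\ref{basediag} to get $b(G)\le 4$ together with Remark~\ref{rem:ibisirredundantsize} to conclude $G$ is not IBIS.

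I would carry out the steps in the order: (1) fix the Hermitian form and describe $N\le \SU(3,q)$ explicitly; (2) verify $Z(N)$ has order $q$ and $N\cap Z(\SU(3,q))=1$; (3) pick explicit non-commuting $a,b$ and central $c$ and check the three centralizer containments in $T$; (4) assemble the irredundant non-base sequence and finish via Proposition~\ref{basediag} and Remark~\ref{rem:ibisirredundantsize}.

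The main obstacle I expect is step (3): unlike $\PSL(3,q)$, where the unipotent radical of a Borel is a familiar Heisenberg-type group, here $N$ is the unipotent radical of a maximal parabolic of $\SU(3,q)$ and its multiplication is twisted by the field automorphism $x\mapsto x^q$, so the commutator relations and the precise description of centralizers of individual elements require care, especially in distinguishing the behaviour for $p=2$ from that for $p$ odd. Once the centralizers of $a,b,c$ within $N$ (and hence the relevant containments within $T$) are pinned down, the remainder is formal and parallels Proposition~\ref{PSL3q} verbatim.
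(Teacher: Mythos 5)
Your proposal is correct and follows essentially the same route as the paper: the paper's proof takes exactly the subgroup $N$ you describe (the order-$q^3$ unipotent radical, written explicitly with respect to the antidiagonal Hermitian form), picks a nontrivial central element $c$ and two explicit non-commuting elements $a,b$, and uses the same centralizer chain to produce the irredundant non-base sequence $([1,1],[1,c],[1,b],[1,a])$ before concluding via Proposition~\ref{basediag} and Remark~\ref{rem:ibisirredundantsize}. The only work you defer---writing down $a,b,c$ and checking they behave as claimed under the twisted multiplication---is carried out in the paper by choosing $c$ from the kernel of the trace map and $a,b$ with second-row entries $1$ and a primitive element $\omega$ (so $\omega^q\neq\omega$ forces $[a,b]\neq 1$), which is routine.
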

\begin{proof} Let  $q=p^f$, with $p$ prime, and let $\alpha$ be the automorphism of order $2$ of the field $\mathbb F_{q^2}$ with $q^2$ elements defined by $\alpha(x)=x^q$.
All $\alpha$-Hermitian forms in a vector space of dimension $3$ over $\mathbb F_{q^2}$ are equivalent, so we may assume that $\GU(3,q)$ is the isometry group of the form 
$B$ such that $B((x_1,x_2,x_3), (y_1,y_2,y_3))=x_1y_3^q+x_2y_3^q+x_3y_1^q$ associated to the matrix
 $$\begin{pmatrix} 0&0&1\\0&1&0\\1&0&0\end{pmatrix}.$$ Let
$$M:=\left\{ \begin{pmatrix} 1&-\alpha^{q}&\beta\\0&1&\alpha\\0&0&1\end{pmatrix}\big| \alpha,\beta \in \mathbb F_{q^2} ,\beta+\beta^q+\alpha \alpha^q=0\right\}.$$
It is not difficult to prove that $M$ is a subgroup of the special unitary group $\SU(3,q)$, and since $M$ intersects the center of $\SU(3,q)$ trivially, it can be identified with a subgroup of $T=\PSU(3,q).$
We shall use without further comment the fact that the trace map from $\mathbb{F}_{q^2}$ to $ \mathbb{F}_{q}$ given by $\mathrm{tr}(\beta)= \beta+\beta^q$ is surjective. 
Taking a non-zero element $\gamma\in \mathbb{F}_{q^2}$  such that $\gamma+\gamma^q=0,$ then
$$c:=\begin{pmatrix} 1&0&\gamma\\0&1&0\\0&0&1\end{pmatrix}$$
is a non-trivial central element of $M.$
Denote by $\omega$ a primitive element of $
\BBF_{q^2},$ then $\omega^{q+1}\in \BBF_q$ and consequently there exists $\delta\in \mathbb F_{q^2}$ such that $\delta+\delta^q=-\omega \omega^q.$ Hence we can consider the following elements of $M:$
$$a:=\begin{pmatrix} 1&-1&0\\0&1&1\\0&0&1\end{pmatrix}\; \mbox{and}\; \,b:=\begin{pmatrix} 1&-\omega^q &\delta \\0&1&\omega\\0&0&1\end{pmatrix}.$$
Since $\omega^q\ne \omega,$ then $a$ and $b$ do not commute.
Hence, we have that
 $a\in C(c)\setminus (C(c)\cap C(b))$, $b\in  (C(c)\cap C(b))\setminus (C(c)\cap C(b)\cap C(a)) $, and $c\in (C(c)\cap C(b)\cap C(a)) $. So $([1,1], [1,c], [1,b], [1,a])$ is an irredundant sequence  of size $4$ that is not a base. Now, the result follows combining Propoition~\ref{basediag} and Remark~\ref{rem:ibisirredundantsize}.
\end{proof}

So far, we proved that when $G$ is monolithic primitive, then $T^2\le G\le W(2,T)$ is a non IBIS group. Hence we need to consider the case $k\ge 3$.

\begin{prop} \label{prop:k=3}
If $T^3 \le G\le W(3,T)$ then $G$ is not IBIS.
\end{prop}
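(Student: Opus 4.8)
The plan is to combine the base-size bound of Proposition~\ref{basediag} with Remark~\ref{rem:ibisirredundantsize}. Since $G$ is monolithic and $k=3$, the top group $P_G$ is primitive on three points, hence $P_G\in\{\Alt(3),\Sym(3)\}$, and Proposition~\ref{basediag} gives $b(G)=2$ when $P_G=\Alt(3)$ and $b(G)\in\{2,3\}$ when $P_G=\Sym(3)$; in particular $b(G)\le 3$. By Remark~\ref{rem:ibisirredundantsize} it therefore suffices to produce a single irredundant sequence that is not a base and whose length is at least $b(G)$: appending points moved by its (nontrivial) stabilizer extends it to an irredundant base of size strictly larger than $b(G)$, so $G$ is not IBIS. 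The engine of every construction is the observation that for $t\in T$ the element $(t,t,t)\in T^3\le G$ fixes $[1,1,1]$, and fixes $[1,s_2,s_3]$ precisely when $s_2,s_3\in C_T(t)$, while each point $[1,s,1]$ with $s\ne 1$ strictly reduces the stabilizer of $[1,1,1]$ because $H\supseteq T$ is non-abelian and so $C(s)=C_H(s)<H$.

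The case $b(G)=2$ is immediate and uniform in $T$: for any $t\ne 1$ the sequence $([1,1,1],[1,t,1])$ is irredundant by the remark above and is not a base, since its stabilizer contains $(t,t,t)\ne 1$. Its length equals $b(G)=2$, so extending it yields an irredundant base of size at least $3$. This settles every monolithic $G$ with $P_G=\Alt(3)$, as well as all cases with $P_G=\Sym(3)$ and $b(G)=2$.

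It remains to treat $b(G)=3$, where necessarily $P_G=\Sym(3)$, and here I would split according to whether $T$ is a CT-group. If $T\ne\PSL(2,2^f)$, then by Theorem~\ref{thm:CT} it is not a CT-group, so there exist $a,b\in T\setminus\{1\}$ with $1\ne C_T(a)\cap C_T(b)\ne C_T(a)$. Choosing $c\ne 1$ in $C_T(a)\cap C_T(b)$ and an element of $C_T(a)\setminus C_T(b)$ shows that $([1,1,1],[1,a,1],[1,b,1])$ is irredundant, the strict drop at $[1,b,1]$ being witnessed by that element, while its stabilizer contains $(c,c,c)\ne 1$; thus it is an irredundant sequence of length $3=b(G)$ that is not a base, and we conclude as before. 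If instead $T=\PSL(2,2^f)$, every centralizer $C_T(x)$ is abelian, so two points $[1,a,1],[1,b,1]$ can never give both a strict diagonal drop and a nontrivial common stabilizer: a shared nontrivial stabilizing element of the form $(c,c,c)$ forces $a,b$ into a single abelian Sylow subgroup, where all the relevant centralizers coincide. Here I would keep a \emph{permutation} element alive instead: take a transposition-type element $h=(y,y,y)\sigma'\in G_{[1,1,1]}$ with $\sigma'$ a transposition of $P_G$, reduced if necessary to order $2$, whose fixed-point set has the shape $\{[1,s,s^{y}]:s\in T\}$ and therefore meets many distinct Sylow subgroups of $T$. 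One then selects two such fixed points $p_1,p_2$ realising the two remaining strict drops of the stabilizer of $[1,1,1]$ while $h$ stays in the final stabilizer, producing again an irredundant length-$3$ sequence that is not a base.

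The hard part will be the $\PSL(2,2^f)$ sub-case. Because all centralizers are abelian, the strict reductions must be squeezed out of the permutation layer rather than the diagonal one, and making this precise requires: (i)~exhibiting an actual involution in the transposition coset $yH$, or, failing that, adapting the argument to a $3$-cycle type element; (ii)~the explicit, convention-dependent formula for the $\Sym(3)$-action on the cosets $[1,s_2,s_3]$, needed to compute the fixed set of $h$; and (iii)~checking that this fixed set together with $[1,1,1]$ contains a base of $G$, which is exactly what guarantees that a third point $p_2$ moved by the nontrivial stabilizer $G_{([1,1,1],p_1)}$ can be found among the fixed points of $h$. I would also verify whether $b(G)=3$ can occur at all for $T=\PSL(2,2^f)$ with $P_G=\Sym(3)$; should it not, this delicate sub-case is vacuous and only the centralizer-chain argument is required.
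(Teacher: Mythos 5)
Your reduction to $P_G=\Sym(3)$ with $b(G)\le 3$, the disposal of the case $b(G)=2$, and the centralizer-chain construction when $T$ is not a CT-group are all correct; the last of these is essentially the argument the paper uses elsewhere (Proposition~\ref{p1}). But the proof is not complete: the sub-case $T\cong\PSL(2,2^f)$ with $b(G)=3$ is left as a sketch with three explicitly acknowledged unverified steps, and the hoped-for vacuousness of that sub-case is not established --- Proposition~\ref{basediag} only gives $b(G)\in\{2,3\}$ when $P_G=\Sym(3)$, and nothing you prove rules out the value $3$ for these $T$. Since the whole point of Theorem~\ref{thm:CT} is that exactly the groups $\PSL(2,2^f)$ escape the centralizer argument, this is the essential case, and as it stands there is a genuine gap there.

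The paper avoids the case split on $T$ altogether. Since $P_G=\Sym(3)$ there is an element $(y,y,y)\sigma\in G$ with $\sigma=(1,2)$, and by \cite[1.48]{gor} one can choose a non-trivial $t\in C_T(y)$; the paper then considers the sequence $([1,1,1],[1,1,t],[1,t,t])$. The first drop is witnessed by $(s,s,s)$ with $s\notin C_T(t)$; the second by $(y,y,y)\sigma$ itself, which fixes $[1,1,t]$ because $t\in C_T(y)$ but moves $[1,t,t]$; and $(t,t,t)\ne 1$ fixes all three points, so the sequence is irredundant and not a base, giving an irredundant base of size at least $4>b(G)$. This is precisely the ``permutation-layer'' witness your sketch gropes for, but realised by varying which coordinates carry the single element $t$ rather than by computing the fixed-point set of $(y,y,y)\sigma$; it is uniform in $T$ and makes your CT/non-CT dichotomy unnecessary. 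To salvage your own route you would need either to prove $b(G)=2$ whenever $T=\PSL(2,2^f)$ here, or to carry out steps (i)--(iii) of your sketch in full; the paper's two-point trick is the cleaner repair.
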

\begin{proof}
Assume by contradiction that $G$ is an IBIS group.
By Proposition~\ref{basediag} and Lemma~\ref{lem:notibis2} we can assume 
$P_G=\Sym(3)$. In particular $\sigma:=(1,2)\in P_G,$ so there exist $y\in \aut(T)$, $s_2,s_3\in T$ such that $(y,s_2y,s_3y)\sigma\in G.$ That is $(y,y,y)\sigma\in G$ for some $y\in \Aut(T).$
It follows from the classification of the finite simple groups that there exists  a non-trivial element $t$ in $C_T (y)$ (see for example (\cite[1.48]{gor}).
So we have that $(y,y,y)\sigma\in G_{([1,1,1], [1,1,t])}$, while $(y,y,y)\sigma\notin G_{[1,t,t]}$. If $s\notin C_T(t),$ then $(s,s,s)\in G_{[1,1,1]}\setminus G_{[1,1,t]}.$
Moreover $(t,t,t)\in G_{([1,1,1], [1,1,t], [1,t,t])}$. Hence $([1,1,1], [1,1,t], [1,t,t])$ is an irredundant sequence that is not a base.  In particular $G$ has an irredundant base of size at least $4$. Now, the result follows combining~\ref{basediag} and Remark~\ref{rem:ibisirredundantsize}.
\end{proof}

\begin{prop}\label{prop:kalmeno4}
Let $k\ge 4$ and let $T^k \le G\le W(k,T).$ Then $G$ is not an IBIS group.
\end{prop}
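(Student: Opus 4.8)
The plan is to produce an irredundant base of $G$ that is strictly longer than $b(G)$; by Remark~\ref{rem:ibisirredundantsize} this is exactly what rules out IBIS. First I would reduce the top group. By Proposition~\ref{basediag}(1), if $P_G$ does not contain $\Alt(k)$ then $b(G)=2$, and Lemma~\ref{lem:notibis2} already excludes IBIS; so I may assume $P_G\in\{\Alt(k),\Sym(k)\}$. Moreover, whenever $b(G)=2$ (which by Proposition~\ref{basediag} includes e.g. $P_G=\Alt(4)$) Lemma~\ref{lem:notibis2} again finishes. Hence the substantive case is $P_G\supseteq\Alt(k)$ with $b(G)\ge 3$.

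Next, mimicking the non-base sequences of Proposition~\ref{prop:k=3}, I would fix an element $1\ne t\in T$ and consider the \emph{nested} points
$$\omega_0:=[1,1,\dots,1],\qquad \omega_i:=[1,\underbrace{1,\dots,1}_{k-1-i},\underbrace{t,\dots,t}_{i}]\quad(1\le i\le k-1).$$
Since $(t,\dots,t)\in T^k\le G$ fixes every point all of whose coordinates lie in $C_T(t)\supseteq\{1,t\}$, the diagonal element $(t,\dots,t)\ne 1$ fixes all the $\omega_i$, so no prefix of $(\omega_0,\omega_1,\dots,\omega_{k-1})$ can be a base. The claim is that this sequence is irredundant up to length at least $k-1$: passing from $\omega_0$ to $\omega_1$ shrinks the inner part of the stabiliser from $H$ to $C_H(t)$, which is proper because $t\ne 1$; and each subsequent point, switching one further coordinate from $1$ to $t$, refines the partition of $[k]$ recorded by the coordinates and hence strictly shrinks the top-group part of the stabiliser, using $P_G\supseteq\Alt(k)$ together with the fact that the only alternative to ``$\pi$ preserves every support'' — namely ``$\pi$ swaps a support with its complement'' — becomes impossible once two of the nested supports have size at least $2$.

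Finally, since $b(G)$ is far smaller than $k$ (of order $\log_{|T|}k$, as can be read off from the base-size analysis of \cite{diagonal}), in particular $b(G)\le k-1$, I would truncate $(\omega_0,\dots,\omega_{k-1})$ to its prefix of length $b(G)$. This prefix is still irredundant, and, having an even larger stabiliser that still contains $(t,\dots,t)$, is still not a base; extending it greedily to a base yields an irredundant base of size at least $b(G)+1$. As this exceeds $b(G)$, Remark~\ref{rem:ibisirredundantsize} shows that $G$ is not IBIS.

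The main obstacle is the strict-descent verification in the middle step: one must track precisely how the inner factor $C_H(t)$ and the top group $P_G$ — including the outer-automorphism twist, i.e. the cosets $Hy_\pi$ attached to $\pi\in P_G$ — jointly determine each stabiliser, and check that at every stage there is genuinely an element $(x,\dots,x)\pi\in G$ with $x(t)=t$ realising a symmetry that the next point destroys; here one exploits $T^k\le G$ to slide $x$ within its coset so as to centralise $t$. The second delicate point is the uniform bound $b(G)\le k-1$, which is what guarantees that the truncation length $b(G)$ stays within the irredundant range of the nested sequence.
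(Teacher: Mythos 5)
Your overall strategy (produce an irredundant non-base sequence of length $b(G)$ and extend it past $b(G)$, then invoke Remark~\ref{rem:ibisirredundantsize}) is sound, and the reduction to $\Alt(k)\le P_G$ via Proposition~\ref{basediag}(1) and Lemma~\ref{lem:notibis2} matches the paper. But the step you yourself flag as ``the main obstacle'' is a genuine gap, and your proposed fix does not close it. For $i\ge 2$, strict descent at $\omega_i$ requires an element $(x,\dots,x)\pi\in G$ with $x\in C_{\aut(T)}(t)$, $\pi$ fixing the previous support pointwise, and $\pi$ moving one further point. The set of $x$ with $(x,\dots,x)\pi\in G$ is a coset $y_\pi H$ of $H$ in $\Aut(T)$, and ``sliding $x$ within its coset'' only adjusts $x$ by elements of $H\supseteq T$: the coset $y_\pi H$ meets $C_{\aut(T)}(t)$ if and only if $y_\pi$ preserves the $H$-class of $t$, which is not automatic for your fixed, arbitrarily chosen $t$. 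For $k\ge 5$ this can be rescued (the map $\pi\mapsto y_\pi H$ is a homomorphism from $P_G$ into a section of $\Out(T)$, which is solvable, so it kills $\Alt(k)$ and the pure permutations of $\Alt(k)$ actually lie in $G$; then take $x=t$), but you do not make this argument; and for $k=4$ with $P_G=\Sym(4)$ the relevant $\pi$ (e.g.\ a $3$-cycle fixing the support point) need not lift trivially, so $t$ must be chosen \emph{after} $G$, e.g.\ as a nontrivial element of $C_T(y_\pi)$ via \cite[1.48]{gor}. A second, smaller gap: your truncation needs $b(G)\le k-1$, but Proposition~\ref{basediag} only gives base-size values for $k\le 4$ when $\Alt(k)\le P_G$; for $k\ge 5$ you are importing an unquoted bound from \cite{diagonal}.

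It is worth noting that the paper's proof avoids both difficulties by a different device: it takes $\tau=[t_1,t_2,1,\dots,1]$ with $t_1,t_2$ generating $T$ and not conjugate in $\aut(T)$ (via \cite{GK}), computes $G_{(\iota,\tau)}$ exactly, and then inserts an intermediate point $\eta=[s,s,1,\dots,1]$ with $G_\iota>G_{(\iota,\eta)}>G_{(\iota,\eta,\tau)}=G_{(\iota,\tau)}$. Since the two irredundant sequences $(\iota,\tau)$ and $(\iota,\eta,\tau)$ end at the same stabilizer, extending both by a common continuation yields irredundant bases of lengths differing by one, with no need to know $b(G)$ at all. If you want to keep your nested-sequence construction, you must supply the perfectness/solvability argument for $k\ge 5$, treat $k=4$ with an adapted choice of $t$, and justify $b(G)\le k-1$ beyond what Proposition~\ref{basediag} states.
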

\begin{proof} If $P_G$ does not contain $\alt(k)$, then $b(G)=2$ by Proposition~\ref{basediag}, hence the result follows from Lemma~\ref{lem:notibis2}.
	So we can assume that $\alt(k)\le P_G.$
	Let  $(u_1,\dots, u_k) \in T^k \leq G.$ Recall that $g=(y,\dots,y)\sigma$ stabilises $[u_1,\dots, u_k]$ if and only if 
$${g}^{(u_1,u_2, u_3\dots, u_k)^{-1}}= (u_1yu_{1\sigma}^{-1},\dots,u_k y u_{k\sigma}^{-1})\sigma\in D(k,T),$$
i.e. if and only if
\begin{equation}\label{crit}u_1yu_{1\sigma}^{-1}=\dots=u_k y u_{k\sigma}^{-1}.
\end{equation} Consider in particular $(u_1,\dots,u_k)=
(t_1,t_2,1,\dots,1)$ where $t_1$ and $t_2$ are chosen so that they are not conjugated in $\aut(T)$ and $T=\langle t_1,t_2\rangle$ (choose for instance an involution $t_1$, then the existence of $t_2$ follows from the results in \cite{GK}). Let $\tau:=[u_1,\dots,u_k].$ We are going to deduce from (\ref{crit}) that $g=(y,\dots,y)\sigma$ stabilises $\tau$ if and only if
1 and 2 are fixed by $\sigma$ and $y=1.$ Let $w_i:=u_iyu_{i\sigma^{-1}}^{-1}$ with $u_i=t_i$ if $i\leq 2,$ $u_i=1$ otherwise. Assume $1\sigma^{-1}=j>2.$ Then $w_j=yt_1^{-1},$ and $w_i\in \{y,yt_2^{-1}\},$ when $i\geq 3$ and $i\neq j$. This would imply $w_i\neq w_j$ in contradiction with (\ref{crit}). So we must have $1\sigma^{-1} \in \{1,2\}$. A similar argument shows that $2\sigma^{-1} \in \{1,2\}.$ If follows  that $w_1=w_2=w_3=\dots=w_k=y.$ If $1\sigma=2$, then
$t_1yt_2^{-1}=y,$ i.e. $t_2=t_1^y$ in contradiction with the assumption that $t_1$ and $t_2$ are not conjugated in $\aut(T).$ So we must have $1\sigma=1$ and $2\sigma=2.$ This implies	$t_1yt_1^{-1} = t_2yt_2^{-1}=y,$ hence $y\in C_{\aut(T)}(\langle t_1,t_2\rangle)=C_{\aut(T)}(T)=1$.

Let $\iota:=[(1,\dots,1)]$. We have proved that $G_{(\iota,\tau)}=\{\sigma \in G \mid 1\sigma=1, 2\sigma=2\}.$ Since $\alt(k) \leq P_G,$ there exists $a \in \aut(T)$ such that $z=(a,\dots,a)(1,2)(3,4)\in G.$ Let
$1\neq s \in C_T(a)$ and consider $\eta:=[s,s,1,\dots,1].$
Clearly $G_{(\iota,\tau)}\leq G_{(\iota,\eta)}$
and $z\in G_{(\iota,\eta)} \setminus G_{(\iota,\tau)}.$ 
%
%
%
%
%
%
Further, there exists $b \in \aut(T)$ such that $v:=(b,\dots,b)(1,2,3) \in G.$ If follows from (\ref{crit}) and the fact that $s\neq 1,$ that  $v \in 
G_\iota \setminus G_\eta$.  So we have
$G_\iota > G_{(\iota,\eta)} > G_{(\iota,\eta,\tau) }=G_{(\iota,\tau)}$ and therefore $G$ is not IBIS.\end{proof}

Combining Propositions~\ref{cor:generatoribelli},~\ref{prop:A7},~\ref{prop:A5} ,~\ref{bPSL2q},~\ref{PSL},~\ref{PSL3q},~\ref{PSU3q},\ref{prop:k=3},~\ref{prop:kalmeno4}, we finally deduce the following result.
\begin{thm}\label{thm:nonmonolithicdiag}
Every monolithic primitive group of diagonal type is a non IBIS group.
\end{thm}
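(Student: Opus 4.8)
The plan is to argue by a complete case analysis on the number $k$ of simple direct factors of the socle $T^k$, and—when $k=2$—on the isomorphism type of $T$, assembling the conclusion from the propositions already established. Since $G$ is monolithic of diagonal type, the top group $P_G$ is primitive, hence transitive, on $[k]$, so $k\ge 2$; thus the three ranges $k\ge 4$, $k=3$ and $k=2$ are exhaustive, and it suffices to show that each is covered by one of the preceding results.

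For $k\ge 4$ I would invoke Proposition~\ref{prop:kalmeno4} directly. There the governing dichotomy is whether $P_G$ contains $\Alt(k)$: if not, then $b(G)=2$ by Proposition~\ref{basediag} and Lemma~\ref{lem:notibis2} already forbids IBIS, while if $\Alt(k)\le P_G$ one exhibits an explicit stationary chain realised through elements of $P_G$ certifying an irredundant base strictly longer than $b(G)$. The case $k=3$ is handled analogously by Proposition~\ref{prop:k=3}: primitivity on three points forces $P_G\in\{\Alt(3),\Sym(3)\}$, the alternating case again giving $b(G)=2$ and being dispatched by Lemma~\ref{lem:notibis2}, and for $P_G=\Sym(3)$ one builds an irredundant, non-base sequence of length $4$.

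The substantial work is the case $k=2$, where $P_G=\langle\sigma\rangle$ with $\sigma=(1,2)$ and one must control both the centraliser data $C(t)$ and the twisting sets $I(t)$ that govern the extra elements $(x,x)\sigma\in G$. For every simple $T$ outside the list $\{\Alt(7),\PSL(2,q),\PSL(3,q),\PSU(3,q)\}$, Theorem~\ref{b3} supplies a generating pair $x,t$ with $t$ an involution and no involutory automorphism inverting $x$ while fixing $t$; Lemma~\ref{l1} converts this into $b(G)=3$, and Proposition~\ref{p1} then produces a length-$4$ irredundant sequence from the failure of the CT-property (Theorem~\ref{thm:CT}), yielding Corollary~\ref{cor:generatoribelli}. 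It remains only to treat the four exceptional families, which is exactly the content of Propositions~\ref{prop:A7} ($\Alt(7)$),~\ref{prop:A5} ($\Alt(5)\cong\PSL(2,4)\cong\PSL(2,5)$),~\ref{bPSL2q} ($\PSL(2,q)$ with $q>5$ odd),~\ref{PSL} ($\PSL(2,2^f)$, $f\ge 3$),~\ref{PSL3q} ($\PSL(3,q)$) and~\ref{PSU3q} ($\PSU(3,q)$); together these exhaust every remaining possibility for $T$, and combining all the cited results gives the theorem.

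The main obstacle is precisely this exceptional analysis for $k=2$, and among the four families the hardest is $\PSL(2,2^f)$: since these are the CT-groups, the uniform argument of Proposition~\ref{p1} is \emph{unavailable} by design, so one cannot deduce failure of IBIS from a mismatch of centralisers alone. Instead, for each exceptional family one must hand-construct, from the explicit matrix or permutation realisation of $T$, elements whose centralisers $C(t)$ and inverting sets $I(t)$ interlace so as to force a stationary chain longer than $b(G)$, while simultaneously tracking whether the diagonal automorphism coset satisfies $yH=H$, since the value of $H$ alters which sequences are irredundant. This is what makes the monolithic $\PSL(2,2^f)$ case delicate and reveals the sharp contrast with the non-monolithic setting, where the \emph{same} simple group gives genuine IBIS examples; here the extra twisting elements $(x,x)\sigma$ enlarge the relevant stabilisers enough to break invariance of base size. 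Once all nine cases are in place, no monolithic diagonal-type primitive group is IBIS.
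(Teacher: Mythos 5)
Your proposal is correct and follows essentially the same route as the paper, which proves Theorem~\ref{thm:nonmonolithicdiag} precisely by combining Corollary~\ref{cor:generatoribelli}, Propositions~\ref{prop:A7},~\ref{prop:A5},~\ref{bPSL2q},~\ref{PSL},~\ref{PSL3q},~\ref{PSU3q},~\ref{prop:k=3} and~\ref{prop:kalmeno4} according to the value of $k$ and, when $k=2$, the isomorphism type of $T$. Your identification of the exceptional families of Theorem~\ref{b3} as the residual cases, and of $\PSL(2,2^f)$ as the delicate one where the CT-property blocks the uniform argument of Proposition~\ref{p1}, matches the paper's organization exactly.
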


%
%

\section{Product type}\label{sec:prod}

Let $H\le \mathrm{Sym}(\Gamma)$ be a primitive group of almost simple type or of diagonal type with socle $T$. 
For $k\ge 2,$ we consider $W=H\wr \mathrm{Sym}(k)$ acting on $\Omega=\Gamma^k$, with its natural product action. Then $G$ is of product type if $T^k\le G\le W$ and the group
$P$ of the elements $\sigma \in \mathrm{Sym}(k)$ such that  $(h_1,\dots, h_k)\sigma \in G$ for some $h_i\in H$ is transitive. Write $\Omega=\Gamma_1\times \dots \times \Gamma_k$ where $\Gamma_i = \Gamma$ for each $i$.
Without loss of generality, we may assume that $G$ induces $H$ on each
of the $k$ factors $\Gamma_i$ of $\Omega$. In the whole section we will assume that $G$ is a primitive permutation group of product type.

%

\begin{thm}
The group $G$ is not an IBIS group. 
\end{thm}
\begin{proof}
To show this result, let assume by contradiction that $G$ is an IBIS group. 
We are going to construct two irredundant bases of $G$ and we will compare their lengths. 

%
Let $\SS:=\{\gamma_0, \gamma_1,\dots,\gamma_r\}$ be an irredundant base of $T\leq \sym(\Omega)$.
By Lemma~\ref{lem:notibis2} 
we deduce that
\begin{equation}\label{eq:rgrande}
r\ge2.
\end{equation}
Let ${\alpha}_i=(\gamma_i,\dots, \gamma_i)$ with $0\le i\le r$ and let $\beta_{i,j}$ be the $k$-tuple having the $i$-th component equal to $\gamma_j$ and the others equal to $\gamma_0$, for $1\le i\le k$ and $1\le j\le r$. 
The following sequences
\begin{align*}
\BB_s=\{\alpha_0, \alpha_1,\dots, \alpha_r\}\;\,\; \mbox{and} \,\;\; \BB_l=\{\alpha_0,\beta_{1,1},\dots, \beta_{1,r},\beta_{2,1}, \dots, \beta_{2,r}, \dots ,\beta_{k,1}, \dots, \beta_{k,r}\}
\end{align*}
are irredundant sequences of $G$, and $\BB_s$ has size $r+1,$ while $\BB_l$ has size $1+kr$. 
Indeed, from the choice of $\SS,$ for every $1\le a\le r-1$, there exists $u_a\in T$ that stabilizes $\gamma_0,\dots,\gamma_a$ but does not stabilize $\gamma_{a+1},$ and consequently $(u_a, u_a \dots,u_a )\in G$ stabilises $\alpha_0, \alpha_{1},\dots, \alpha_a$ but not  $\alpha_{a+1}$. Moreover $(u_a, 1, \dots,1 )\in G$ stabilises $\alpha_0, \beta_{1,1}, \dots, \beta_{1,a}$ but not  $\beta_{1,a+1}$,
$(1, u_a, \dots,1 )\in G$ stabilises $\alpha_0, \beta_{1,1}, \dots, \beta_{1,r}, \beta_{2,1}, \dots, \beta_{2,a} $ but not $ \beta_{2,a+1}$ and proceeding with this argument we finally get that
$(1, 1, \dots,u_a )\in G$ stabilises $\alpha_0, \beta_{1,1}, \dots, \beta_{1,r}, \beta_{2,1}, \dots, \beta_{2,r}, \dots, \beta_{k,1}, \dots, \beta_{k,a}$, but not  $\beta_{k,a+1}$.

\medskip 

Let $X:=G\cap H^k$.
It is not difficult to observe that $$X_{(\BB_s)}=X_{(\BB_l)}=\{(h_1,\dots,h_k)\in X\mid h_1 \in H_{(\SS)}, \dots, h_k \in H_{(\SS)}\}.$$
Now, let $\AA=\{\delta_1, \dots, \delta_b\}$ be a base of minimal size of $X_{(\BB_s)}=X_{(\BB_l)}$. Hence $\BB_s\cup \AA$ and $\BB_l\cup \AA$ are still irredundant sequences for $G$.

Here, let $\delta_1,\dots, \delta_c,$ and $\nu_1,\dots, \nu_{c'}$ be elements of $G$ such that the sequences $\BB_1=\BB_s\cup \AA \cup \{\delta_1,\dots, \delta_c\}$ and  $\BB_2=\BB_l\cup A \cup \{ \nu_1,\dots, \nu_{c'} \}$ are irredundant bases of $G$.
Since $G$ is IBIS, then $|\BB_1|=r+1+b+c=1+rk+ b+c'=|\BB_2|.$ 
Hence \begin{align}\label{eq:firstbit}r(k-1)=c-c'\le c.\end{align}
Note that $G_{(\BB_s\cup \AA)}\cap X=1,$ then $G_{(\BB_s\cup \AA)}\cong  G_{(\BB_s\cup \AA)}/(G_{(\BB_s\cup \AA)}\cap X) \cong  G_{(\BB_s\cup \AA)}X / X \le \sym(k)$.
So, denoting by $\ell(\sym(k))$ the maximal length of a chain of subgroups of $\sym(k),$ we can estimate $c$ with $\ell(\sym(k)).$ From (\ref{eq:firstbit}) and (\ref{eq:rgrande})  we deduce that
\begin{equation}\label{eq:finale}
2(k-1)\le \ell(\sym(k)).
\end{equation}
By~\cite{cameronsoltur} we have that $\ell(\sym(k))< 3k/2,$ hence (\ref{eq:finale}) yields $k<4$.
Since $\ell(\sym(2))=1$ and  $\ell(\sym(3))=2$, (\ref{eq:finale}) allows us to exclude also $k=2$ and $k=3$. This contradicts the assumption $k\ge 2$ and the result follows.
 \end{proof}

 \section{Twisted wreath product type} \label{sec:TW}
 
Let $T$ be a non-abelian simple group, and let $k$ be an integer that is at least $2.$ A
group $G$ of twisted wreath product type with socle $T^k$ acts primitively on a set $|\Omega|$ 
with degree $|T^k|$ and is also a twisted wreath product, which is a split extension of $T^k$ by a transitive
subgroup $P$ of $\sym(k)$. In the whole section we will assume that $G$ is a primitive permutation group of twisted wreath product type.

\begin{thm}
The group $G$ is not an IBIS group. 
\end{thm}
\begin{proof}We may identify $G$ with a subgroup of the  wreath product $W=\aut(S) \wr P$, with the property that
	$\soc(G)=T^k$ and $T^k$ has a maximal complement, say $H$, in $G$, which is isomorphic to $P.$ In particular for every $\sigma \in P$ there exists a unique element $(a_1,\dots,a_k) \in (\aut(T))^k$ such that
	$(a_1,\dots,a_k)\sigma\in H.$ 
	
By \cite[Theorem 5.0.1]{fawcettetesi}, if $P$ is a primitive subgroup of $\sym(k),$ then $b(G)=2$.  In this case, since $\soc(G)$ is non-abelian, the result follows from Lemma~\ref{lem:notibis2}. So we may assume that $k$ is not a prime. Since $P$ is transitive, it contains a fixed-point free permutation $\sigma.$ Either $\sigma$ or one of its powers (in the case when $\sigma$ is a $k$-cycle) admits at least two orbits of size at least 2. So it is not restrictive to assume that $P$ contains an element 
\begin{align}\label{eq:hp1}
\rho=(1,\dots , r)(r+1,r+2,\dots , r+s) \tau 
\end{align}
for some  $r, s\ge 2$ and $\tau \in \sym(k)$ which pointwise fixes the set $\{1,\dots,r+s\}$. There exists
a unique element $(a_1,\dots,a_k)\in (\aut(T))^k$ such that
$h:=(a_1,\dots,a_k)\rho \in H.$ It follows from the classification of the finite simple groups that $C_T(a_1\cdots a_r)\neq 1$ (\cite[1.48]{gor}). In particular there exists $t\in C_T(a_1\cdots a_r)$ of prime order, say $p.$ Choose $u \in T$ of prime order $q$, with $q\neq p,$ and consider the following elements of $T^k$: $y_1=(t,t^{a_1},t^{a_1a_2},\dots,t^{a_1a_2\cdots a_{r-1}},1,\dots,1),$
$y_2=(1,\dots,1,u,1, \dots ,1),$ where $u$ is the entry in position $r+1,$ and $y=y_1y_2.$ We have that $h \in C_H(y_1)=C_H(y^q)$ but $h \notin C_H(y)$.
Hence $H\cap H^{y^q}\cap H^y=H\cap H^y=C_H(y) < C_H(y^q)=H \cap H^{y^q}<H$ and therefore $G$ is not IBIS.
%
\end{proof}

\section{Proof of Theorem~\ref{thm:main}}\label{sec:mainthm}
  We work through the non-affine and non-almost simple classes of the O'Nan-Scott Theorem. If $G$ is of diagonal type, as showed in Propositions~\ref{prop:sorpresa},~\ref{prop:solosorpresa0},~\ref{prop:solosorpresa} and Theorem~\ref{thm:nonmonolithicdiag}, there is a unique infinite family of IBIS group. 
If $G$ is of product action type or twisted wreath product,
then the result follows immediately from
Section~\ref{sec:prod} and Sections~\ref{sec:TW} respectively. Hence the proof is complete.

\bigskip

\noindent

ACKNOWLEDGMENTS. All authors are members of GNSAGA.

	\thebibliography{99}






\bibitem{cam} P. J. Cameron, Permutation Groups, London Mathematical Society Student Texts. Cambridge University Press, {1999}.

\bibitem{ibis} P. J. Cameron and D. G. Fon-Der-Flaas, {Bases for Permutation Groups and Matroids}, \textit{Europ. J. Combin.} \textbf{16} (1995), 537--544.  

\bibitem{cameronsoltur}P. J. Cameron, R. Solomon and A. Turull, Chains of Subgroups in Symmetric Groups, 	\textit{J. Algebra} \textbf{127} (1989), 340--352.

\bibitem{diagonal} J. B. Fawcett, The 
		base size of a primitive diagonal group,
		\textit{J. Algebra} \textbf{375} (2013), 302--321.
		
	\bibitem{fawcettetesi} J. B. Fawcett, Bases of primitive permutation groups, Phd dissertation.
	
\bibitem{gor}  D. Gorenstein, Finite simple groups. An introduction to their classification. University Series in Mathematics. Plenum Publishing Corp., New York, 1982.
		
\bibitem{GK} R.~M.~Guralnick and W.~M.~Kantor, Probabilistic generation of finite simple groups. Special issue in honor of Helmut Wielandt. J. Algebra 234 (2000),  743--792.



\bibitem{huppert} B. Huppert, Endliche Gruppen I, Springer, Berlin, 1967.




\bibitem{LeLi} D.~Leemans and M.~W.~Liebeck,
Chiral polyhedra and finite simple groups, \textit{Bull. Lond. Math. Soc.} \textbf{49} (2017), no. 4, 581--592.

\bibitem{LPSLPS} M.~W.~Liebeck, C.~E.~Praeger and J.~Saxl, On  the O'Nan-Scott theorem for finite primitive permutation groups.
\textit{J. Australian Math. Soc. (A)} \textbf{44} (1988), 389--396.


		

\bibitem{robinson} D. J. S. Robinson,  A course in the theory of groups, Springer Science $\&$ Business Media, 1996.

\bibitem{Suz} M.~Suzuki, The nonexistence of a certain type of simple groups of odd order, \textit{Proc. Amer.
Math. Soc.} \textbf{8} (1957), 686--695.



	\end{document}